\crefname{equation}{}{}
\newtheorem{theorem}{Theorem}[section]
\newtheorem*{theorem*}{Theorem}
\newtheorem{lemma}[theorem]{Lemma}
\theoremstyle{definition}
\newtheorem{definition}[theorem]{Definition}
\theoremstyle{remark}
\newtheorem{remark}[theorem]{Remark}
\numberwithin{equation}{section}
\begin{document}

\begin{frontmatter}
\title{Uniqueness and some related estimates for Dirichlet problem with fractional Laplacian\tnoteref{t1}}

\author[rvt]{Congming Li}
\ead{congming.li@sjtu.edu.cn}
\author[rvt]{Chenkai Liu}
\ead{Lck0427@sjtu.edu.cn}

\tnotetext[t1]{Congming Li \& Chenkai Liu are partially supported by NSFC12031012, NSFC11831003 and  Institute of Modern Analysis-A Frontier Research Center of Shanghai.}

\address[rvt]{School of Mathematical Sciences, Shanghai Jiao Tong University, China;}

\begin{abstract}
For the fractional Laplace equation, a surprising observation is the non-uniqueness for the basic Dirichlet type problems.
In this paper, a somewhat sharp uniqueness condition for the fractional Laplace equation is established.  We derive the $L^p$-estimate for  fractional Laplacian operators to better understand this phenomena. Several weighted fractional Sobolev spaces appear naturally.  We then establish the embedding relations between these spaces. These existence-uniqueness conditions and the spaces we introduce here are intrinsically related to the fractional Laplacian. These are basic properties to the fractional Laplace equations and can be useful in the study of related problems.
\end{abstract}

\begin{keyword}
fractional Laplacian \sep Dirichlet problem \sep uniqueness and non-uniqueness \sep $L^p$-estimate.

\MSC[2010]  35A01 \sep 35A02 \sep 35C15 \sep 35S15.

\end{keyword}

\end{frontmatter}


\section{Introduction}

The $W^{2,p}$ problem plays an essential role in the classical elliptic PDE theory
: Consider the following problem \cite{Gilbarg1991Elliptic}:
\begin{equation}\label{pb0}
\left\{
\begin{aligned}
-\Delta u+\vec b\cdot\nabla u+cu&=f\ \ \ &\text{in}\ &\Omega,\\
u&=0\ \ \ &\text{on}\ &\partial \Omega.
\end{aligned}
\right.
\end{equation}
Assuming $\Omega\subset \mathbb{R}^n$ be a bounded domain with smooth boundary,  $f\in L^p(B_1)$, $\vec b, c\in L^\infty(\Omega)$ and $c\geqslant0$ in $\Omega$, one derives the existence and uniqueness of (strong) solution
$u\in W^{2,p}(\Omega)\cap W^{1,p}_0(\Omega)$ of \eqref{pb0}. Moreover, the solution $u$ admits the following apriori estimate:
\begin{equation}
\|u\|_{W^{2,p}(\Omega)}\leqslant C\|f\|_{L^p(\Omega)}.
\end{equation}

The Dirichlet type problems of fractional Laplacian arose for its applications in physical sciences, probability and finance.
Many interesting results have emerged in recent years (see \cite{1996Local,2006Classification,2010Regularity,2015Non,2016Existence,D2016ON,2017Maximum,2017On,2019Symmetry}). In this paper, we first consider the following problem:
\begin{equation}\label{pb1}
\left\{
\begin{aligned}
(-\Delta)^s u&=f\quad &\text{in}\ &\mathcal{D}'(B_1),\\
u&=0\quad&\text{in}\ &\mathbb{R}^n\backslash B_1.
\end{aligned}
\right.
\end{equation}

Very different from case of  the classical Laplacian, for the problem \eqref{pb1} with fractional Laplace, one can construct a non-trivial solution for $f\equiv 0$:
\begin{equation}
\label{nontvl}
u(x)= \begin{cases}
\displaystyle c(n,s)\int_{\partial B_1}\frac{(1-|x|^2)^s}{|x-y|^n}\ d\mathcal{H}^{n-1}_y=C(n,s)(1-|x|^2)^{s-1}, & \text{for}\  x\in B_1;\\
0, & \text{for}\  x\in \mathbb{R}^n\backslash B_1.
\end{cases}
\end{equation}

We search for a sharp condition for the solution to be unique.
It  is introduced in \cite{silvestre} that maximum principle holds for \eqref{nontvl} if lower semi-continuity is assumed.
As a result, the continuous solution $u$ of \eqref{pb1} must be unique. However,  the continuous assumption is too strong.
Inspired by the work of \cite{abatangelo2018green} and \cite{ros}, we come up with an idea that give a trace-type existence and uniqueness condition for problem \eqref{pb1} as follow:

For convenience, we denote
$\delta=\operatorname{dist}(\cdot, \mathbb{R}^n\backslash \Omega)$ and define the weighted $L^p$-space to be:
\begin{equation}
L^p_r(\Omega)=\{f\in L^1_{\mathrm{loc}}(\Omega)|\|f\|_{L^p_r(\Omega)}:=\|\delta^rf\|_{L^p(\Omega)}<\infty\},
\end{equation}
\begin{theorem}
\label{cor1}
  Let $0<s<1$, then problem  \eqref{pb1} has at most one solution $u\in \mathcal{L}_{2s}$ that satisfies:
\begin{equation}
\label{cd:un1}
  \lim_{\epsilon\rightarrow 0}\frac{1}{\epsilon^s}\int_{\{x\in B_1|\delta(x)\leqslant\epsilon\}}|u(x)| d x=0.
\end{equation}
Moreover, if  $f\in L^1_s(B_1)$,  then the solution exists.
\end{theorem}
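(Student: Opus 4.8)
\emph{Proof proposal.} I would prove the two assertions of Theorem~\ref{cor1} separately, the uniqueness being the more delicate part.

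\medskip
\noindent\textbf{Uniqueness.} Let $u_1,u_2\in\mathcal{L}_{2s}$ be two solutions of \eqref{pb1} satisfying \eqref{cd:un1}, and set $w:=u_1-u_2$. Then $w$ vanishes outside $B_1$ (so $w\in\mathcal{L}_{2s}$ is the same as $w\in L^1(B_1)$), $w$ is $s$-harmonic in $\mathcal{D}'(B_1)$ hence smooth in $B_1$, and by the triangle inequality $w$ again satisfies \eqref{cd:un1}. The goal is $w\equiv0$. Fix $\phi\in C^\infty_c(B_1)$ and let $\psi$ be the finite-energy solution of $(-\Delta)^s\psi=\phi$ in $B_1$, $\psi=0$ in $\mathbb{R}^n\setminus B_1$; by the boundary regularity of Ros-Oton and Serra \cite{ros}, $\psi\in C^s(\mathbb{R}^n)\cap C^\infty(B_1)$ and $|\psi|\lesssim\delta^{\,s}$. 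For small $\epsilon>0$ take a cutoff $\eta_\epsilon\in C^\infty(\mathbb{R}^n)$ with $\eta_\epsilon\equiv1$ on $\{\delta\geqslant 2\epsilon\}$, $\eta_\epsilon\equiv0$ on $\{\delta\leqslant\epsilon\}$ and $|\nabla^k\eta_\epsilon|\lesssim\epsilon^{-k}$. Since $\psi$ is smooth away from $\partial B_1$, the product $\psi_\epsilon:=\psi\eta_\epsilon$ lies in $C^\infty_c(B_1)$, so $\int_{\mathbb{R}^n}w\,(-\Delta)^s\psi_\epsilon=0$ by the definition of $s$-harmonicity; combining this with $(-\Delta)^s\psi=\phi$ on $B_1$ and $w\equiv0$ off $B_1$ gives the identity
\begin{equation}
\int_{B_1}w\,\phi\,dx=\int_{\mathbb{R}^n}w\,(-\Delta)^s g_\epsilon\,dx,\qquad g_\epsilon:=\psi\,(1-\eta_\epsilon).
\end{equation}

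\medskip
\noindent\textbf{The core estimate, and the main obstacle.} The function $g_\epsilon$ is supported in the boundary layer $\{\delta\leqslant2\epsilon\}$ with $|g_\epsilon|\lesssim\delta^{\,s}\lesssim\epsilon^{\,s}$ there. The heart of the argument is the pair of pointwise bounds
\begin{equation}
\big|(-\Delta)^s g_\epsilon(x)\big|\lesssim_\phi\epsilon^{-s}\ \ (\delta(x)\leqslant 4\epsilon),\qquad
\big|(-\Delta)^s g_\epsilon(x)\big|\lesssim_\phi\frac{\epsilon^{\,s+1}}{\delta(x)^{1+2s}}\ \ (\delta(x)> 4\epsilon),
\end{equation}
which should follow by splitting the singular integral at scale $\epsilon$: for $x$ deep in the layer one uses that $g_\epsilon$ agrees with $\psi$ there (hence $(-\Delta)^s\psi=\phi\equiv0$ near $\partial B_1$) together with the $\delta^{\,s}$-control and the interior bounds $|\nabla^k\psi|\lesssim\delta^{\,s-k}$; for $x$ outside the layer, $g_\epsilon(x)=0$ and $(-\Delta)^s g_\epsilon(x)=-c_{n,s}\int_{\{\delta\leqslant2\epsilon\}}g_\epsilon(y)|x-y|^{-n-2s}dy$, estimated via the tubular geometry of $\{\delta\leqslant2\epsilon\}$ (essentially $\big|\{\delta\leqslant\epsilon\}\cap\{|x-\cdot|\sim r\}\big|\lesssim\epsilon\,r^{n-1}$). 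I expect establishing these estimates — in particular obtaining the sharp decay $\delta^{-1-2s}$ in the far regime rather than the crude $\delta^{-n-2s}$ — to be the principal technical difficulty. Granting them, the identity above yields
\begin{equation}
\Big|\int_{B_1}w\,\phi\Big|\ \lesssim_\phi\ \frac{1}{\epsilon^{\,s}}\int_{\{\delta\leqslant 4\epsilon\}}|w|\,dx\ +\ \epsilon^{\,s+1}\!\!\int_{\{\delta> 4\epsilon\}}\frac{|w|}{\delta^{1+2s}}\,dx .
\end{equation}
The first term tends to $0$ exactly by \eqref{cd:un1}. For the second, let $h(\rho):=\int_{\{\delta\leqslant\rho\}}|w|$, a nondecreasing function with $h(\rho)=o(\rho^{\,s})$ as $\rho\to0$ and $h(1)=\|w\|_{L^1(B_1)}<\infty$; a Stieltjes integration by parts gives $\epsilon^{\,s+1}\int_{\{\delta>4\epsilon\}}|w|\delta^{-1-2s}\lesssim\epsilon^{\,s+1}h(1)+\epsilon^{-s}h(4\epsilon)+\epsilon^{\,s+1}\int_{4\epsilon}^{1}h(\rho)\rho^{-2-2s}d\rho$, and all three terms vanish as $\epsilon\to0$ (the last after splitting the integral at $\rho=\sqrt\epsilon$ and using $h(\rho)\leqslant\eta\rho^{\,s}$ near $0$). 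Hence $\int_{B_1}w\,\phi=0$ for every $\phi\in C^\infty_c(B_1)$, so $w=0$ a.e.\ in $B_1$, and therefore $w\equiv0$.

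\medskip
\noindent\textbf{Existence.} For $f\in L^1_s(B_1)$ I would set $u(x):=\int_{B_1}G_s(x,y)f(y)\,dy$, where $G_s$ is the Green function of $(-\Delta)^s$ on $B_1$, and verify the four required properties. Trivially $u=0$ outside $B_1$. Since $\int_{B_1}G_s(x,y)\,dx=\kappa_{n,s}(1-|y|^2)^s\asymp\delta(y)^{\,s}$ (the explicit torsion function of the ball), Tonelli gives $\|u\|_{L^1(B_1)}\leqslant C\|f\|_{L^1_s(B_1)}<\infty$, so $u\in L^1(B_1)\subset\mathcal{L}_{2s}$. The equation $(-\Delta)^s u=f$ in $\mathcal{D}'(B_1)$ holds for $f\in L^\infty(B_1)$ by the standard theory and extends to $f\in L^1_s(B_1)$ on approximating $f$ by its truncations $f_k$: one has $G_s[f_k]\to u$ in $L^1(B_1)$, while for $\phi\in C^\infty_c(B_1)$ the bound $|\phi|\leqslant C_\phi\delta^{\,s}$ gives $\int f_k\phi\to\int f\phi$. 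Finally, for \eqref{cd:un1} estimate $\int_{\{\delta(x)\leqslant\epsilon\}}|u(x)|\,dx\leqslant\int_{B_1}|f(y)|\,v_\epsilon(y)\,dy$ with $v_\epsilon(y):=\int_{\{\delta(x)\leqslant\epsilon\}}G_s(x,y)\,dx$; the sharp two-sided bound $G_s(x,y)\asymp|x-y|^{2s-n}\min\{1,\delta(x)^{\,s}\delta(y)^{\,s}|x-y|^{-2s}\}$ yields $v_\epsilon(y)\lesssim\epsilon^{\,s}\delta(y)^{\,s}$ for $\delta(y)\leqslant\epsilon$ and $v_\epsilon(y)\lesssim\epsilon^{\,s+1}\delta(y)^{\,s-1}$ for $\delta(y)>\epsilon$, hence
\begin{equation}
\int_{B_1}|f|\,v_\epsilon\ \lesssim\ \epsilon^{\,s}\int_{\{\delta\leqslant\epsilon\}}|f|\delta^{\,s}\ +\ \epsilon^{\,s+1}\int_{\{\delta>\epsilon\}}|f|\delta^{\,s-1},
\end{equation}
and both terms are $o(\epsilon^{\,s})$ (the first because $\int_{\{\delta\leqslant\epsilon\}}|f|\delta^{\,s}\to0$, the second after splitting at $\delta=\sqrt\epsilon$). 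This verifies \eqref{cd:un1} and completes the existence part, which — apart from the Green-function estimates — is routine bookkeeping.
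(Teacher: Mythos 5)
Your existence argument is essentially the paper's own (Theorem~\ref{cor2}): write $u=G\ast f$, bound $\int_{\{\delta(x)\leqslant\epsilon\}}G(x,y)\,dx$ by $C\epsilon^s\delta(y)^s$ near the boundary and by $C\epsilon^{s+1}\delta(y)^{s-1}$ away from it, and conclude; the paper packages the last step as dominated convergence applied to $A_\epsilon(y)=\epsilon^{-s}\int_{\{\delta(x)\leqslant\epsilon\}}G(x,y)\,dx$ rather than splitting at $\delta=\sqrt{\epsilon}$, but this is the same proof.

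Your uniqueness argument, however, is a genuinely different route. The paper (Theorem~\ref{thm:un}) mollifies $u$ itself: $u_\epsilon=J_\epsilon u$ is $s$-harmonic in $B_{1-\epsilon}$ and vanishes outside $B_{1+\epsilon}$, so the Poisson representation of Lemma~\ref{exi1} (rescaled to $B_{1-\epsilon}$) expresses $u_\epsilon$ on $B_r$ as an integral of $u$ over the annulus $B_1\setminus B_{1-2\epsilon}$ against a kernel of size $M(r)\epsilon^{-s}$, giving $\|u_\epsilon\|_{L^\infty(B_r)}\leqslant M(r)\epsilon^{-s}\int_{B_1\setminus B_{1-2\epsilon}}|u|\to0$ directly from \eqref{cd:un1}. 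You instead run a duality argument: test against $\phi$, solve the adjoint problem for $\psi$, and push the pairing onto the boundary layer via the cutoff $g_\epsilon=\psi(1-\eta_\epsilon)$. Both work, but note what each buys: the paper's proof needs only the explicit Poisson kernel and no boundary regularity theory, and the condition \eqref{cd:un1} enters in one line; your proof needs the Ros-Oton--Serra bounds $|\psi|\lesssim\delta^s$, $|\nabla^k\psi|\lesssim\delta^{s-k}$ from \cite{ros} plus the two-regime estimate on $(-\Delta)^sg_\epsilon$, which is where all the work is. That estimate is true, but be aware that in the near regime $\delta(x)\leqslant4\epsilon$ the naive bound fails: $g_\epsilon$ is only $C^s$ up to $\partial B_1$, the principal value of a $C^s$ function is not absolutely convergent, and a direct estimate of $\int g_\epsilon(y)|x-y|^{-n-2s}dy$ produces a $\delta(x)^{-s}$ blow-up, which combined with \eqref{cd:un1} alone does not yield a vanishing contribution (one would need a Dini-type rate). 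The bound $|(-\Delta)^sg_\epsilon(x)|\lesssim\epsilon^{-s}$ really does require writing $(-\Delta)^sg_\epsilon=(-\Delta)^s\psi-(-\Delta)^s(\psi\eta_\epsilon)=0-(-\Delta)^s(\psi\eta_\epsilon)$ for $\delta(x)<\epsilon/2$ (using that $\phi$ vanishes near $\partial B_1$) so that the remaining integral is over a region at distance $\gtrsim\epsilon$ from $x$, exactly as you indicate; so the cancellation you invoke is not optional but essential, and this single estimate carries the whole proof. The paper's route avoids this issue entirely, which is why it is shorter. Your far-field bound $\epsilon^{s+1}\delta^{-1-2s}$ via the tubular-neighborhood volume count and the subsequent Stieltjes argument are correct as written.
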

\begin{remark}
Indeed, for the regular Laplacian case:
\begin{equation}
\left\{
\begin{aligned}
-\Delta u&=0\ \ \ &\text{in}\ &\Omega,\\
u&=0\ \ \ &\text{on}\ &\partial \Omega.
\end{aligned}
\right.
\end{equation}
The uniqueness condition $u=0$ can be understood in the following way:
\begin{equation}
  \lim_{\epsilon\rightarrow 0}\frac{1}{\epsilon}\int_{\{x\in B_1|\delta(x)\leqslant\epsilon\}}|u(x)| d x=0.
\end{equation}

In this sense,  Theorem \ref{cor1} can be seen as a fractional case generalization.
\end{remark}
\begin{remark}
\label{rmk:nonun}
The condition \eqref{cd:un1} in Theorem \ref{cor1} is sharp for the uniqueness to hold. Indeed, one can consider the nontrivial solution \eqref{nontvl}.
It is easily shown that $u$ satisfies:
\begin{equation}
\label{cd:un0}
  \lim_{\epsilon\rightarrow 0}\frac{1}{\epsilon^s}\int_{\{x\in B_1|\delta(x)\leqslant\epsilon\}}|u(x)| d x=c>0.
\end{equation}
\end{remark}

In the following, we consider a more complicated  problem(see applications in \cite{cabre1,caffarelli2,chen2,li0}):
\begin{equation}\label{pb3}
\left\{
\begin{aligned}
(-\Delta)^s u+\vec b\cdot\nabla u+cu&=f\quad &\text{in}\ &\mathcal{D}'(B_1),\\
u&=0\quad&\text{in}\ &\mathbb{R}^n\backslash B_1.
\end{aligned}
\right.
\end{equation}
Here, in order to ensure the ellipticity of this equation, we need $\vec b\equiv 0$ if $2s\leqslant1$.

In our study, we introduce several subtle estimates and apply the method of continuity to get the following solvability theorem of Dirichlet problem \eqref{pb3}.
\begin{theorem}
\label{exi}
Let $\frac{1}{2}<s<1$, $1-s\leqslant r\leqslant s$, $1\leqslant p<\infty$, suppose $\vec b, c\in L^\infty(B_1)$, $c\geqslant 0$ in $B_1$ and $f\in  L^p_r(B_1)$. Then  there exists a unique solution $u\in\mathcal{L}_{2s}$
 of \eqref{pb3} that satisfies the  condition \eqref{cd:un1}. Furthermore, we have the apriori estimate: for a constant $C>0$ depending only on $n,s,p$ and $r$.
\begin{equation}
  \|(-\Delta)^su\|_{L^{p}_{r}(B_1)} \leqslant C \|f\|_{L^p_r(B_1)}.
\end{equation}
The derivative of $u$ can also be estimated:

\begin{enumerate}
\item If $p=1$, then for any $q$ satisfying $1\leqslant q<\frac{n}{n-2s+1}$,
$|\nabla u|\in  L^q_r(B_1)$ with
\begin{equation}
\||\nabla u|\|_{ L^q_r(B_1)}\leqslant C\|f\|_{ L^1_r(B_1)}.
\end{equation}
\item If $1<p<\frac{n}{2s-1}$, then $|\nabla u|\in  L^{\frac{np}{n-(2s-1)p}}_r(B_1)$ with
\begin{equation}
\||\nabla u|\|_{ L^{\frac{np}{n-(2s-1)p}}_r(B_1)}\leqslant C\|f\|_{ L^p_r(B_1)}.
\end{equation}
\item If $p>\frac{n}{2s-1}$, then $|\nabla u|\in  L^\infty_r(B_1)$ with
\begin{equation}
\||\nabla u|\|_{ L^\infty_r(B_1)}\leqslant C\|f\|_{ L^p_r(B_1)}.
\end{equation}
\end{enumerate}

\end{theorem}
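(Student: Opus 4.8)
The plan is to solve \eqref{pb3} by the method of continuity applied to the family
\[
L_t u:=(-\Delta)^s u+t\,\vec b\cdot\nabla u+t\,cu,\qquad t\in[0,1],
\]
joining $L_0=(-\Delta)^s$ to $L_1$. For the solution space I would take the set $\mathcal X$ of all $u\in\mathcal L_{2s}$ that vanish in $\mathbb R^n\setminus B_1$, satisfy \eqref{cd:un1}, and have $(-\Delta)^s u\in L^p_r(B_1)$; by Theorem \ref{cor1} this is exactly the image $\mathcal G\big(L^p_r(B_1)\big)$ of the solution operator $\mathcal G$ of \eqref{pb1}, so $u\mapsto\|(-\Delta)^s u\|_{L^p_r(B_1)}$ is a genuine norm and $(\mathcal X,\|\cdot\|_{\mathcal X})$ is complete, being an isometric copy of the data space $\mathcal Y:=L^p_r(B_1)$. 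Building \eqref{cd:un1} into $\mathcal X$ is essential: otherwise the nontrivial function \eqref{nontvl} would lie in the kernel of $L_0$, destroying the uniform injectivity the method of continuity needs (cf.\ Remark \ref{rmk:nonun}). First I would record, from the weighted $L^p$-estimate for $(-\Delta)^s$ and the weighted fractional Sobolev embeddings established earlier, that every $u\in\mathcal X$ has $u\in L^p_r(B_1)$ and $\nabla u\in L^{q}_r(B_1)$ with the exponents $q$ of items (1)--(3); here the hypothesis $1-s\le r\le s$ is precisely what keeps $\delta^r u$ and $\delta^r\nabla u$ in these spaces, the upper bound $r\le s$ matching the $\delta^s$-decay of $\mathcal G f$ at $\partial B_1$ and the lower bound $r\ge 1-s$ its $\delta^{s-1}$-type gradient growth. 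Consequently $L_t:\mathcal X\to\mathcal Y$ is bounded uniformly in $t$ (using $\vec b,c\in L^\infty(B_1)$ and $L^{q}_r(B_1)\hookrightarrow L^p_r(B_1)$ on the bounded set $B_1$), while $L_0:\mathcal X\to\mathcal Y$ is bijective by construction.

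The crux is the $t$-uniform a priori estimate $\|u\|_{\mathcal X}\le C\|L_t u\|_{\mathcal Y}$. Writing $f:=L_t u$ and isolating the lower-order terms,
\[
\|(-\Delta)^s u\|_{L^p_r}\le\|f\|_{L^p_r}+\|\vec b\|_{\infty}\|\nabla u\|_{L^p_r}+\|c\|_{\infty}\|u\|_{L^p_r}.
\]
Since $\nabla u$ carries order $2s-1$ against the order $2s$ of $(-\Delta)^s u$ and $0<2s-1<2s$, a weighted interpolation inequality $\|\nabla u\|_{L^p_r}\le\varepsilon\|(-\Delta)^s u\|_{L^p_r}+C_\varepsilon\|u\|_{L^p_r}$ — equivalently a scaling/localization argument on small balls $B_\rho$, where the relevant constant for the gradient scales like $\rho^{2s-1}$ — absorbs the gradient term and leaves $\|(-\Delta)^s u\|_{L^p_r}\le 2\|f\|_{L^p_r}+C\|u\|_{L^p_r}$. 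To remove the last term I would argue by contradiction and compactness: for $u_k\in\mathcal X$ with $\|u_k\|_{\mathcal X}=1$, $t_k\to t_0$ and $\|L_{t_k}u_k\|_{\mathcal Y}\to0$, the $\mathcal X$-bound plus the weighted Rellich-type compactness coming from the embeddings let one pass to $u_k\to u$ and $\nabla u_k\to\nabla u$ in $L^p_r(B_1)$; the limit $u\in\mathcal X$ solves $L_{t_0}u=0$, while the displayed inequality forces $\|u\|_{L^p_r}\ge c_0>0$, contradicting uniqueness of the homogeneous problem for $L_{t_0}$. With $C$ uniform, the method of continuity upgrades the bijectivity of $L_0$ to that of $L_1$, producing a unique $u\in\mathcal X$ with $L_1u=f$ and $\|(-\Delta)^s u\|_{L^p_r}\le C\|f\|_{L^p_r}$. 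The gradient bounds (1)--(3) then follow because, by uniqueness, $u=\mathcal G\big((-\Delta)^s u\big)$, so one applies the weighted fractional Sobolev embeddings to $(-\Delta)^s u\in L^p_r(B_1)$: the exponents $\tfrac{np}{n-(2s-1)p}$, $\infty$, and the range $q<\tfrac{n}{n-2s+1}$ at $p=1$ are exactly the Sobolev exponents one derivative below order $2s$.

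Two points carry the real difficulty. The first is the uniqueness input used above: for $u\in\mathcal X$ with $L_tu=0$ a natural route is the Kato inequality $(-\Delta)^s|u|\le\operatorname{sgn}(u)(-\Delta)^s u=-t\,\vec b\cdot\nabla|u|-tc|u|\le-t\,\vec b\cdot\nabla|u|$, valid since $c\ge0$, which exhibits $|u|$ as a nonnegative subsolution of $(-\Delta)^s v+t\,\vec b\cdot\nabla v\le0$ in $B_1$, vanishing outside $B_1$ and obeying \eqref{cd:un1}; the maximum principle for this operator — nonlocal and with a bounded drift, and available precisely because \eqref{cd:un1} rules out boundary-concentrated solutions of type \eqref{nontvl} — then forces $u\equiv0$. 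The second, and more fundamental, is the package of \emph{subtle estimates} the whole scheme rests on: the weighted $L^p$-estimate and the weighted fractional Sobolev embeddings for the Green operator $\mathcal G$ of the ball, together with the associated Rellich-type compactness; these must be secured before any of the above runs, and the exponent restrictions in the theorem trace back to them. Finally, the endpoint $p=1$ needs separate handling — replacing weak $L^p_r$-compactness by weak-$*$ compactness of measures and checking the limit still lies in $L^1_r(B_1)$.
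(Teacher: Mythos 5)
Your global architecture --- the method of continuity for $L_t=(-\Delta)^s+t\,\vec b\cdot\nabla+t\,c$ on the space $\mathcal X=\mathscr{W}^{2s,p}_{r,0}(B_1)=G\ast L^p_r(B_1)$, a $t$-uniform a priori estimate obtained by a compactness--contradiction argument resting on the weighted embeddings, and the gradient bounds read off from $u=G\ast\big((-\Delta)^s u\big)$ --- is exactly the paper's. The genuine gap is in the uniqueness input. You reduce $L_tu=0$ to a maximum principle for the nonnegative subsolution $|u|$ of $(-\Delta)^s v+t\,\vec b\cdot\nabla v\le 0$, assuming of $v$ only membership in a weighted Sobolev class, vanishing outside $B_1$, and \eqref{cd:un1}. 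No such low-regularity maximum principle is available here, and it is not a routine consequence of \eqref{cd:un1}: the only maximum principle the paper proves (Theorem \ref{maxp}) requires $u\in C(\mathbb{R}^n)\cap C^1(B_1)$, because its proof evaluates $(-\Delta)^su_\delta$ and $\nabla u$ pointwise at an interior minimum and needs $|\nabla u|$ small near the minimum set to beat the drift term. The paper closes this by first \emph{bootstrapping}: starting from $u\in\mathscr{W}^{2s,1}_{r,0}(B_1)$, the embeddings (Theorems \ref{est1} and \ref{est2}) applied to $(-\Delta)^su=-\vec b\cdot\nabla u-cu$ raise the integrability step by step to $\mathscr{W}^{2s,\infty}_{r,0}(B_1)$, whence $u\in C^{1-r}(\overline{B_1})\cap C^{1,2s-1}_{\rm loc}(B_1)$ by Theorems \ref{C1+} and \ref{C0+}, and only then is the maximum principle applied. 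Your Kato-inequality shortcut does not avoid this: even granting $(-\Delta)^s|u|\le\operatorname{sgn}(u)(-\Delta)^su$ distributionally (itself needing justification at this regularity), you are left invoking precisely the statement that has to be proved. Without the bootstrap, the uniqueness --- and with it the whole contradiction argument for the a priori estimate --- is unsupported.

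A secondary weak point: the interpolation inequality $\|\nabla u\|_{L^p_r}\le\varepsilon\|(-\Delta)^su\|_{L^p_r}+C_\varepsilon\|u\|_{L^p_r}$ is asserted via ``scaling on small balls'', but the weight $\delta^r$ is not scale-invariant near $\partial B_1$, so this is not free. It is also unnecessary: the paper normalizes $\|u_k\|_{W^{1,p}_r(B_1)}=1$ (the full first-order weighted norm), gets $\|(-\Delta)^su_k\|_{L^p_r(B_1)}\le 2\Lambda$ directly from the trivial bound $\|(-\Delta)^su\|_{L^p_r}\le\Lambda\{\|u\|_{W^{1,p}_r}+\|f\|_{L^p_r}\}$ of Lemma \ref{apest1}, and then uses the compact embedding $\mathscr{W}^{2s,p}_{r,0}(B_1)\subset\subset W^{1,p}_r(B_1)$ (Theorem \ref{cpt}) to produce a limit $u_0$ with $\|u_0\|_{W^{1,p}_r(B_1)}=1$ solving the homogeneous problem, contradicting Theorem \ref{uniq}. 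Your separate worry about $p=1$ is likewise moot, since Theorem \ref{cpt} is stated and used for all $1\le p<\infty$.
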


\section{Preliminaries}
\label{2}

For $0<s<1$ and $u\in C_0^\infty(\mathbb{R}^n)$, the fractional Laplacian $(-\Delta)^su$ is given by
\begin{equation}\label{1.0}
(-\Delta)^su(x)=\mathcal{F}^{-1}[(2\pi|\xi|)^{2s}\mathcal{F}[u](\xi)](x)=C_{n,s}{\rm P.V.}\int_{\mathbb{R}^n}\frac{u(x)-u(y)}{|x-y|^{n+2s}}dy,
\end{equation}
Here P.V. stands for the Cauchy principle value, $\mathcal{F}$ denotes the Fourier transform.

One can show that for the above type functions $u$, it holds that:
\begin{equation}
|(-\Delta)^{s} u(x)|\leqslant \frac{C}{1+|x|^{n+2s}}.
\end{equation}

In order that $(-\Delta)^sw$ make sense as a distribution, it is common to define:
\begin{equation*}
\mathcal{L}_{2s}=\bigg\{u: \mathbb{R}^n\rightarrow\mathbb{R}\bigg|\|u\|_{\mathcal{L}_{2s}}:= \int_{\mathbb{R}^n}\frac{|u(y)|}{1+|y|^{n+2s}}dy<+\infty\bigg\}.
\end{equation*}
Then for $w\in \mathcal{L}_{2s}$, $(-\Delta)^sw$  as a distribution is well-defined: $\forall \varphi\in C_0^\infty(\mathbb{R}^n)$,
\begin{equation}
\label{eq3}
(-\Delta)^sw[\varphi]=\int_{\mathbb{R}^n}w(x)(-\Delta)^s\varphi(x) dx.
\end{equation}
Indeed, the $\mathcal{L}_{2s}$ assumption just makes the integral on the right hand side of \eqref{eq3} converges.

For $w\in\mathcal{L}_{2s}$, $(-\Delta)^s w$ is naturally a distribution on $\Omega$, for $f\in L^1_{\rm loc}(\Omega)$ we say
\begin{equation}\label{eq2}
  (-\Delta)^s w=f\qquad\text{in}\quad\mathcal{D}'(\Omega),
\end{equation}
if for any test function $\varphi\in C_0^\infty(\Omega)$, it holds that
\begin{equation}
  (-\Delta)^sw[\varphi]=\int_{\mathbb{R}^n}w(x)(-\Delta)^s\varphi(x) dx=\int_{\Omega}f(x)\varphi(x) dx.
\end{equation}

On the other hand, for $w\in\mathcal{L}_{2s}\cap C^{1,1}_{\rm loc}(\Omega)$, $(-\Delta)^s w$ is pointwisely well-defined on $\Omega$ by the formula
\begin{equation}
(-\Delta)^su(x)=C_{n,s}{\rm P.V.}\int_{\mathbb{R}^n}\frac{u(x)-u(y)}{|x-y|^{n+2s}}dy\qquad\text{for}\quad x\in\Omega.
\end{equation}
Moreover, for $w\in\mathcal{L}_{2s}\cap C^{1,1}_{\rm loc}(\Omega)$, \eqref{eq2} indicates that
\begin{equation}
(-\Delta)^su(x)=f(x)\qquad\text{pointwisely\ for}\ x\in\Omega.
\end{equation}

Define the Poisson kernel $P(\cdot,\cdot)$ and Green's function $G(\cdot,\cdot)$ for $s$-Laplacian in $B_1$ \cite{bucur}  as:
\begin{equation}\label{P}
P(x,y)=c(n,s)\left(\frac{1-|x|^2}{|y|^2-1} \right)^s\frac{1}{|x-y|^n}, \ \ \text{for}\ x\in B_1, y\in\mathbb{R}^n\backslash B_1.
\end{equation}
\begin{equation}\label{G}
G(x,y)=\frac{\kappa(n,s)}{|x-y|^{n-2s}}\int_0^{\rho(x,y)}\frac{t^{s-1}}{(1+t)^\frac{n}{2}}\ dt, \ \ \text{for}\ x,y\in B_1,
\end{equation}
where $\rho(x,y)$ is defined as
\begin{equation}
\rho(x,y)=\frac{(1-|x|^2)(1-|y|^2)}{|x-y|^2}.
\end{equation}

Let $P\ast g$ and $G\ast f$ be defined as
\begin{equation}
  \label{Pg}
  P\ast g(x)=\begin{cases}
\displaystyle \int_{\mathbb{R}^n\backslash B_1} P(x,y)g(y)\ dy&\ \ \ \text{in}\  B_1,\\
g(x)&\ \ \ \text{in}\ \mathbb{R}^n\backslash B_1,
\end{cases}
\end{equation}

\begin{equation}\label{Gf}
G\ast f(x)=\begin{cases}
\displaystyle \int_{B_1} f(y)G(x,y)\ dy&\ \ \ \text{in}\  B_1,\\
0&\ \ \ \text{in}\ \mathbb{R}^n\backslash B_1.
\end{cases}
\end{equation}

The following two results are introduced in \cite{bucur}:
\begin{lemma}\label{exi0}
Let $0<s<1$, $f\in C^{2s+\epsilon}(B_1)\cap C(\overline{B_1})$. Then there exists a unique continuous solution: $u=G\ast f$ that solves the problem \eqref{pb1} pointwisely.
\end{lemma}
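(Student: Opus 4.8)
The plan is to get existence by verifying directly that the Green potential $u=G\ast f$ of \eqref{Gf} solves \eqref{pb1}, and to get uniqueness from the maximum principle for continuous distributional solutions. The two analytic inputs are: (i) the kernel bounds for the $G$ of \eqref{G}, in particular $0\leqslant G(x,y)\leqslant C|x-y|^{2s-n}$ (integrable in $y$) together with $\int_{B_1}G(x,y)\,dy\leqslant C(1-|x|^2)^s$; and (ii) the interior regularity (Schauder) theory for $(-\Delta)^s$, by which $f\in C^{2s+\epsilon}_{\mathrm{loc}}(B_1)$ forces $u\in C^{\beta}_{\mathrm{loc}}(B_1)$ for some $\beta>2s$ (see \cite{ros}).

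For existence, first I would check that $u:=G\ast f$, extended by $0$ outside $B_1$ as in \eqref{Gf}, is bounded and continuous on all of $\mathbb{R}^n$: boundedness and interior continuity follow from $f\in L^\infty(\overline{B_1})$, the uniform bound $\int_{B_1}G(x,y)\,dy\leqslant C$, and dominated convergence, while $\int_{B_1}G(x,y)\,dy\leqslant C(1-|x|^2)^s$ shows $u(x)\to 0$ as $x\to\partial B_1$, so $u$ is continuous across $\partial B_1$ with the correct exterior value. In particular $u$ is bounded with support in $\overline{B_1}$, hence $u\in\mathcal{L}_{2s}$ and $(-\Delta)^su$ is a distribution on $B_1$. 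Next, for $\varphi\in C_0^\infty(B_1)$ I would use \eqref{eq3} and Fubini:
\[
\int_{\mathbb{R}^n}u(x)\,(-\Delta)^s\varphi(x)\,dx=\int_{B_1}\!\int_{B_1}G(x,y)f(y)\,(-\Delta)^s\varphi(x)\,dy\,dx=\int_{B_1}f(y)\,\varphi(y)\,dy ,
\]
Fubini being legitimate since $\int_{B_1}\!\int_{B_1}G(x,y)\,dy\,dx<\infty$ and $f,\ (-\Delta)^s\varphi$ are bounded on $B_1$, and the inner integral equalling $\varphi(y)$ by the reproducing property of the Green function (that $x\mapsto G(x,y)$ is the fundamental solution of the exterior Dirichlet problem, which is how \eqref{G} is built in \cite{bucur}). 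This gives $(-\Delta)^su=f$ in $\mathcal{D}'(B_1)$, i.e.\ \eqref{eq2}. Finally, by (ii), $u\in C^{\beta}_{\mathrm{loc}}(B_1)$ with $\beta>2s$, so the principal-value integral defining $(-\Delta)^su(x)$ converges for every $x\in B_1$ (after first-order symmetrization when $\beta>1$) and is continuous in $x$; since it coincides with the continuous function $f$ as a distribution, $(-\Delta)^su=f$ pointwise in $B_1$. Combined with $u\equiv0$ on $\mathbb{R}^n\setminus B_1$, this exhibits $u=G\ast f$ as a continuous pointwise solution.

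For uniqueness, take two continuous solutions $u_1,u_2$ and put $w=u_1-u_2$: then $w$ is continuous on $\mathbb{R}^n$, vanishes off $B_1$, and $(-\Delta)^sw=0$ in $\mathcal{D}'(B_1)$. Mollifying, $w_\rho:=w\ast\eta_\rho$ is smooth with $(-\Delta)^sw_\rho=0$ pointwise on $B_{1-\rho}$, so $w_\rho$ cannot have a strict interior maximum on $B_{1-\rho}$; letting $\rho\to0$ and using $w=0$ on $\partial B_1$ gives $w\leqslant0$, and the same for $-w$ gives $w\geqslant0$, so $w\equiv0$ --- this is the maximum principle for continuous solutions recalled in the introduction via \cite{silvestre}. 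Hence every continuous solution of \eqref{pb1} equals $G\ast f$. The only genuinely non-routine step is the reproducing identity $\int_{B_1}G(x,y)(-\Delta)^s\varphi(x)\,dx=\varphi(y)$: verifying that the explicit kernel \eqref{G} is the Green function of the fractional exterior Dirichlet problem is what must be imported from the potential-theoretic construction of \cite{bucur} (Riesz potential plus a harmonic corrector built from the Poisson kernel \eqref{P}); once that identity is available, the remaining pieces --- kernel bounds, Fubini, interior regularity, and the maximum principle --- are routine.
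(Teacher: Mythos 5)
The paper offers no proof of Lemma~\ref{exi0}: it is quoted verbatim as a known result from \cite{bucur}, so there is no internal argument to compare yours against. Judged on its own, your outline is essentially sound, but you should be aware that its load-bearing step --- the reproducing identity $\int_{B_1}G(x,y)(-\Delta)^s\varphi(x)\,dx=\varphi(y)$, i.e.\ the fact that the explicit kernel \eqref{G} really is the Green function of $(-\Delta)^s$ on $B_1$ --- is precisely the content of Bucur's construction, and you explicitly import it. So what you have written is less a proof of the lemma than a derivation of the lemma from the main theorem of \cite{bucur}; that is a legitimate reduction (and consistent with how the paper uses the result), but it should be labelled as such. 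The surrounding pieces are correct: $0\leqslant G(x,y)\leqslant C|x-y|^{2s-n}$ and $\int_{B_1}G(x,y)\,dy\leqslant C(1-|x|^2)^s$ do give boundedness, interior continuity, and continuity across $\partial B_1$ of $G\ast f$; Fubini is justified; and interior Schauder regularity upgrades the distributional identity to the pointwise one (indeed $f\in C^{2s+\epsilon}$ gives $u\in C^{2s+\epsilon}_{\mathrm{loc}}$ with exponent above $2s$, which is all the principal value needs). Two small points deserve tightening. First, in the uniqueness step the mollified function $w_\rho$ is supported in $B_{1+\rho}$ but only satisfies $(-\Delta)^sw_\rho=0$ in $B_{1-\rho}$, so you must rule out the positive global maximum of $w_\rho$ sitting in the annulus $B_{1+\rho}\setminus B_{1-\rho}$; this follows from the uniform smallness of $w$ near $\partial B_1$ (continuity of $w$ plus $w\equiv0$ outside $B_1$), and the contradiction at an interior maximum uses that $(-\Delta)^sw_\rho(x_0)>0$ there unless $w_\rho$ is constant --- ``cannot have a strict interior maximum'' is not quite the right formulation. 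Second, to invoke interior regularity you need $u$ to be a bounded distributional (or viscosity) solution \emph{before} applying the Schauder estimate; your ordering (distributional identity first, regularity second) is correct, just make that dependence explicit. With those repairs the argument is a faithful, if partly deferred, justification of the cited lemma, and its mollification-plus-maximum-principle uniqueness step is in fact the same device the paper itself uses later in Theorems~\ref{thm:un} and~\ref{maxp}.
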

\begin{lemma}\label{exi1}
Let $0<s<1$, $g\in \mathcal{L}_{2s}\cap C(\mathbb{R}^n)$. Then there exists a unique continuous solution: $u=P\ast g$ that solves the following problem \eqref{pb2} pointwisely.
\begin{equation}\label{pb2}
\left\{
\begin{aligned}
(-\Delta)^s u&=0\quad &\text{in}\ &\mathcal{D}'(B_1),\\
u&=g\quad&\text{in}\ &\mathbb{R}^n\backslash B_1.
\end{aligned}
\right.
\end{equation}
\end{lemma}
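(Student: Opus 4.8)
The plan is to establish the three assertions of Lemma~\ref{exi1} in turn: that $u=P\ast g$ is well defined and continuous on $\mathbb{R}^n$, that it solves $(-\Delta)^su=0$ pointwise in $B_1$, and that it is the unique continuous solution of \eqref{pb2}.

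\emph{Well-definedness and continuity.} I would first record two properties of the kernel \eqref{P}: the normalization $\int_{\mathbb{R}^n\setminus B_1}P(x,y)\,dy=1$ for every $x\in B_1$ (classical; a direct change of variables, or the Kelvin transform), and the bounds $P(x,y)\le C(x)|y|^{-(n+2s)}$ for $|y|\ge 2$ and $P(x,y)\le C(x)(|y|^2-1)^{-s}$ for $1<|y|<2$, the latter having no $|x-y|^{-n}$ singularity because $\operatorname{dist}(x,\partial B_1)>0$. Since $g\in\mathcal{L}_{2s}$ controls the far field and $(|y|^2-1)^{-s}$ is integrable near $\partial B_1$ (as $s<1$), $P\ast g(x)$ converges absolutely for each $x\in B_1$. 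Continuity is immediate on $\mathbb{R}^n\setminus\overline{B_1}$ (where $u=g$) and, by dominated convergence with the bounds above, in the open ball. The delicate point is continuity at $x_0\in\partial B_1$: using the normalization, $u(x)-g(x_0)=\int_{\mathbb{R}^n\setminus B_1}P(x,y)(g(y)-g(x_0))\,dy$; splitting into $|y-x_0|<\eta$ and $|y-x_0|\ge\eta$, the first piece is bounded by the modulus of continuity of $g$ at $x_0$ (again by the normalization), while on the second piece $P(x,y)\to0$ as $x\to x_0$ (the factor $(1-|x|^2)^s$ vanishes) with a domination uniform for $x$ near $x_0$; letting $x\to x_0$ and then $\eta\to0$ gives $u(x)\to g(x_0)$, hence $u\in C(\mathbb{R}^n)$.

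\emph{The equation in $B_1$.} I would split $u=g\,\mathbf{1}_{\mathbb{R}^n\setminus B_1}+w$, where $w:=u-g\,\mathbf{1}_{\mathbb{R}^n\setminus B_1}$ is supported in $B_1$ and coincides with $P\ast g$ there. For fixed $x\in B_1$ the function $g\,\mathbf{1}_{\mathbb{R}^n\setminus B_1}$ vanishes near $x$, so \eqref{1.0} gives $(-\Delta)^s(g\,\mathbf{1}_{\mathbb{R}^n\setminus B_1})(x)=-C_{n,s}\int_{\mathbb{R}^n\setminus B_1}g(y)|x-y|^{-(n+2s)}\,dy=:-F(x)$, where $F\in C^\infty(B_1)$ (differentiation under the integral) though $F(x)\sim\operatorname{dist}(x,\partial B_1)^{-2s}$ near the boundary. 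Using the explicit identity $P(x,y)=C_{n,s}\int_{B_1}G(x,z)|z-y|^{-(n+2s)}\,dz$ for $x\in B_1$, $y\notin\overline{B_1}$ — a computation from the formulas \eqref{P}, \eqref{G} — one identifies $w=G\ast F$ on $B_1$ (Fubini being legitimate since $G(x,z)\sim\operatorname{dist}(z,\partial B_1)^s$, so $G(x,\cdot)F(\cdot)\in L^1(B_1)$ as $s<1$). Then, by Lemma~\ref{exi0} localized to the interior of $B_1$ (a smooth cutoff handles the boundary singularity of $F$, and $G(x,\cdot)$ is smooth away from $x$), $(-\Delta)^s(G\ast F)=F$ in $\mathcal{D}'(B_1)$, whence $(-\Delta)^su=-F+F=0$ in $\mathcal{D}'(B_1)$. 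Since $u$ is continuous, standard interior regularity for $s$-harmonic distributions upgrades $u$ to $C^\infty(B_1)$, so the equation holds pointwise there. (Alternatively, prove directly that $(-\Delta)^s_xP(x,y)=0$ for $x\in B_1$, $y\notin\overline{B_1}$ — the classical computation via the Kelvin transform and Getoor's formula for $(-\Delta)^s(1-|x|^2)_+^s$ — and then justify exchanging $(-\Delta)^s$ with integration in $y$.)

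\emph{Uniqueness, and the main obstacle.} If $u_1,u_2$ are two continuous solutions, $w:=u_1-u_2\in C(\mathbb{R}^n)$ vanishes on $\mathbb{R}^n\setminus B_1$ and satisfies $(-\Delta)^sw=0$ in $\mathcal{D}'(B_1)$, hence $w\in C^\infty(B_1)$ with $(-\Delta)^sw\equiv0$ pointwise in $B_1$. If $\max_{\overline{B_1}}w>0$ it is attained at some $x_0\in B_1$ (not on $\mathbb{R}^n\setminus B_1$, where $w=0$), and then $(-\Delta)^sw(x_0)=C_{n,s}\int_{\mathbb{R}^n}\frac{w(x_0)-w(y)}{|x_0-y|^{n+2s}}\,dy>0$ strictly, as $w(x_0)>0=w|_{\mathbb{R}^n\setminus B_1}$ on a set of positive measure — contradicting $(-\Delta)^sw(x_0)=0$; thus $w\le0$, symmetrically $w\ge0$, so $w\equiv0$. (One may instead invoke the uniqueness part of Theorem~\ref{cor1}, since $w\in\mathcal{L}_{2s}$ and $w\equiv0$ outside $B_1$ makes \eqref{cd:un1} trivially hold.) The genuinely substantial steps are the boundary continuity — showing the Poisson integral actually attains the prescribed exterior values at $\partial B_1$ — and, above all, the $s$-harmonicity in $B_1$: justifying the nonlocal computation, i.e. the $P$--$G$ kernel identity together with the interior form of Lemma~\ref{exi0}, or equivalently the pointwise relation $(-\Delta)^s_xP(x,y)\equiv0$ and the interchange with integration in $y$.
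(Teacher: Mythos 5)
The paper does not prove this lemma at all: it is quoted verbatim from the reference \cite{bucur}, so there is no internal argument to compare yours against. Taken on its own terms, your reconstruction is sound and follows the standard route (which is essentially Bucur's): absolute convergence and boundary continuity of the Poisson integral via the normalization $\int_{\mathbb{R}^n\setminus B_1}P(x,y)\,dy=1$, $s$-harmonicity via the decomposition $u=g\,\mathbf{1}_{\mathbb{R}^n\setminus B_1}+G\ast F$ together with the kernel identity $P(x,y)=c_{n,s}\int_{B_1}G(x,z)|z-y|^{-(n+2s)}\,dz$, and uniqueness by evaluating $(-\Delta)^s$ at a global extremum. Two caveats. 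First, the real mathematical content of the lemma is concentrated in the two kernel identities (the normalization and the $P$--$G$ relation), which you assert as ``classical'' or ``a computation''; they are true and verifiable from \eqref{P}--\eqref{G}, but a complete proof would have to carry them out, and they are precisely what the cited reference supplies. Your localization of Lemma~\ref{exi0} by cutoff is also only sketched; it is cleaner to observe that $F\sim\delta^{-2s}$ gives $F\in L^1_s(B_1)$ (since $s<1$), so the Green potential is well defined and the distributional identity $(-\Delta)^s(G\ast F)=F$ can be checked by Fubini against test functions. Second, your parenthetical alternative for uniqueness --- invoking Theorem~\ref{cor1} --- would be circular inside this paper: the proof of Theorem~\ref{thm:un} itself uses a rescaled version of Lemma~\ref{exi1} to represent the mollified solution by the Poisson kernel. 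Your primary maximum-principle argument avoids this and should be kept as the main line.
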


One easily observe from the formula  of Green's function that:
\begin{lemma}\label{est3}
\begin{equation}
G(x,y)\leqslant \frac{C}{|x-y|^{n-2s}}\min\left\{\bigg[\frac{(1-|x|)(1-|y|)}{|x-y|^2}\bigg]^s,1\right\}.
\end{equation}
\end{lemma}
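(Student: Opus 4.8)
The plan is to read everything off the explicit formula \eqref{G}, reducing the lemma to two elementary bounds on the one-variable integral
\[
I(\tau):=\int_{0}^{\tau}\frac{t^{s-1}}{(1+t)^{n/2}}\,dt
\]
together with a comparison between $\rho(x,y)$ and the quantity $\eta(x,y):=\dfrac{(1-|x|)(1-|y|)}{|x-y|^{2}}$ in the statement. Since $|x|,|y|<1$ we have $1+|x|,1+|y|<2$, hence $(1-|x|^{2})(1-|y|^{2})\le 4(1-|x|)(1-|y|)$ and therefore $\rho(x,y)\le 4\,\eta(x,y)$; this is the only place where the distinction between $1-|x|$ and $1-|x|^{2}$ enters.

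Next I would estimate $I(\tau)$ in two ways. First, $(1+t)^{-n/2}\le 1$ for $t\ge0$, so $I(\tau)\le\int_{0}^{\tau}t^{s-1}\,dt=\tau^{s}/s$. Second, by monotonicity $I(\tau)\le I(\infty)=\int_{0}^{\infty}t^{s-1}(1+t)^{-n/2}\,dt$, and this integral is finite: the integrand is $\sim t^{s-1}$ near $0$ (integrable as $s>0$) and $\sim t^{s-1-n/2}$ near $\infty$ (integrable as $s<n/2$), and in fact $I(\infty)=B(s,\tfrac n2-s)$. Combining the two bounds with the elementary inequality $\min\{a u,b\}\le\max\{a,b\}\,\min\{u,1\}$ (valid for $u,a,b>0$) gives $I(\tau)\le C_{1}\min\{\tau^{s},1\}$ with $C_{1}=\max\{1/s,\ B(s,\tfrac n2-s)\}$.

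Putting $\tau=\rho(x,y)$ into \eqref{G} then yields $G(x,y)\le \kappa(n,s)\,C_{1}\,|x-y|^{-(n-2s)}\min\{\rho(x,y)^{s},1\}$, and a final use of $\rho\le4\eta$ (so $\min\{\rho^{s},1\}\le 4^{s}\min\{\eta^{s},1\}$) produces the asserted estimate with $C=4^{s}\kappa(n,s)\,C_{1}$. I do not expect any real obstacle here --- it is genuinely a short consequence of the formula, as the paper indicates. The only point worth recording is that the ``$1$'' half of the minimum, i.e.\ the uniform bound $G(x,y)\lesssim|x-y|^{-(n-2s)}$, relies on the finiteness of $I(\infty)$, hence on $s<n/2$; this holds automatically once $n\ge2$, since then $s<1\le n/2$.
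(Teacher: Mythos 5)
Your proposal is correct, and it is essentially the only natural argument: the paper offers no proof at all (the lemma is introduced with ``one easily observes from the formula of the Green's function''), and your two bounds on $I(\tau)$ --- namely $I(\tau)\le\tau^{s}/s$ and $I(\tau)\le B(s,\tfrac n2-s)$ --- together with $\rho\le 4\eta$ are exactly the routine verification being omitted. Your closing remark is worth keeping: the uniform bound $G(x,y)\lesssim |x-y|^{2s-n}$ genuinely requires $s<n/2$, so the lemma as stated implicitly assumes $n\ge 2$ (for $n=1$ and $s\ge\tfrac12$ the Green's function does not vanish on the diagonal and the claimed inequality fails), a restriction the paper never makes explicit.
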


In this paper, the mollification operator: $J_\epsilon $ is needed:
\begin{equation}
J_\epsilon [u](x)=(j_\epsilon\ast u)(x)=\int_{B_{\epsilon}(x)} j_\epsilon(x-y)u(y)\ dy,
\end{equation}
where $j_\epsilon\in C_0^{\infty}(B_\epsilon)$ is the mollifier given by
\begin{equation}j_\epsilon(x)=\frac{1}{\epsilon^n}j\left(\frac{x}{\epsilon}\right).\end{equation}
Here, $j\in C_0^{\infty}(B_1)$ is  a positive smooth radially symmetric function supported in $B_1$ satisfying $\int_{\mathbb{R}^n}j(x)\ dx=1$.

In order to solve the problem, we need to introduce several weighted Sobolev space in both fractional sense and regular sense:
for $0<\alpha<2$, $1\leqslant p\leqslant\infty$ and $-1<r<1$, we define
\begin{definition}
  \begin{equation}
\mathscr{W}^{\alpha,p}_r(\Omega)=\{u\in \mathcal{L}_{2s}|(-\Delta)^{\alpha/2}u\in L^1_{\mathrm{loc}}(\Omega)\ \text{in}\ \mathcal{D}'(\Omega)\ \text{sense}, (-\Delta)^{\alpha/2}u\in L^p_r(\Omega)\}
\end{equation}
with norm: $\|u\|_{\mathscr{W}^{\alpha,p}_r(\Omega)}=\|u\|_{ \mathcal{L}_{2s}}+\|(-\Delta)^{\alpha/2}u\|_{ L^p_r(\Omega)}$.
\end{definition}

In sight of Theorem \ref{cor1}, the following definition is nature to characterise $\mathscr{W}^{\alpha,p}_r(\Omega)$ functions with, in some sense, zero boundary data:
\begin{definition}
\begin{equation}
\mathscr{W}^{\alpha,p}_{r,0}(\Omega)=\{u\in \mathscr{W}^{\alpha,p}_r(\Omega)|u=0\ \text{in}\ \mathbb{R}^n\backslash\Omega\ \text{and\ satisfies\ condition}\ \eqref{cd:un1}\}
\end{equation}
with norm: $\|u\|_{\mathscr{W}^{\alpha,p}_{r,0}(\Omega)}=\|u\|_{\mathscr{W}^{\alpha,p}_r(\Omega)}$.
\end{definition}

A definition of the weighted $W^{1,p}$ space is also needed to fit the above weighted fractional order spaces:
\begin{definition}
  \begin{equation}
W^{1,p}_r(\Omega)=\{u\in W^{1,1}_{\mathrm{loc}}(\Omega)| u\ \text{and}\ Du\in L^p_r(\Omega)\}
\end{equation}
with norm: $\|u\|_{W^{1,p}_r(\Omega)}=\|u\|_{ L^p_r(\Omega)}+\|Du\|_{ L^p_r(\Omega)}$.
\end{definition}
\begin{remark}
  In this paper, we only consider the case when $\Omega=B_1$.
\end{remark}
\begin{remark}
It is easy to verify that space $\mathscr{W}^{\alpha,p}_r(\Omega)$ and $W^{1,p}_r(\Omega)$ are Banach spaces, the completeness immediately comes from the completeness of $L^p$-spaces.
\end{remark}
\begin{remark}
  When we restrict $\Omega=B_1$, then space $\mathscr{W}^{\alpha,p}_{r,0}(B_1)$ is Banach. Indeed, Theorem \ref{thm:un} and Theorem \ref{cor2} implies:
  \begin{equation}
  \mathscr{W}^{\alpha,p}_{r,0}(B_1)=\{u=G\ast f|f\in L^p_r(B_1)\},
  \end{equation}
  one can define the equivalent norm as:
  \begin{equation}
    \|G\ast f\|_{\mathscr{W}^{\alpha,p}_{r,0}(B_1)}=\|f\|_{L^p_r(B_1)}.
  \end{equation}
\end{remark}

An essential observation of this paper is the embedding theorem between such spaces when $\Omega=B_1$(see Section \ref{emb}).

\section{Dirichlet problem for the fractional Laplacian}
In this section, we establish the existence and uniqueness theorem for the problem \eqref{pb1}

\begin{theorem}[Uniqueness]
\label{thm:un}
  Let $0<s<1$, $f=0$ and suppose  $u\in L^1(B_1)$ is a solution of \eqref{pb1} satisfying \eqref{cd:un1}, then $u=0$ a.e. $B_1$.
\end{theorem}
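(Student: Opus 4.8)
My plan is to prove that the only solution of $(-\Delta)^s u = 0$ in $\mathcal{D}'(B_1)$, $u = 0$ in $\mathbb{R}^n \setminus B_1$, lying in $L^1(B_1)$ and satisfying the vanishing-trace condition \eqref{cd:un1}, is $u \equiv 0$. The first step is to \emph{mollify}: set $u_\epsilon = J_\epsilon[u]$, so that $u_\epsilon \in C^\infty$ and $(-\Delta)^s u_\epsilon = J_\epsilon[(-\Delta)^s u] = 0$ on the slightly shrunken ball $B_{1-\epsilon}$ (since the fractional Laplacian commutes with convolution and $(-\Delta)^s u = 0$ in $\mathcal{D}'(B_1)$). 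Inside $B_{1-\epsilon}$ the function $u_\epsilon$ is smooth and $s$-harmonic, so by Lemma~\ref{exi1} applied on $B_{1-\epsilon}$ we get the representation $u_\epsilon = P_{1-\epsilon} * (u_\epsilon|_{\mathbb{R}^n \setminus B_{1-\epsilon}})$ via the Poisson kernel of the ball $B_{1-\epsilon}$; equivalently, by uniqueness for the exterior data problem, $u_\epsilon$ equals the $s$-harmonic extension of its own values on the annular shell $B_1 \setminus B_{1-\epsilon}$ (its values outside $B_1$ being zero). This reduces everything to controlling the Poisson integral of a function supported in a thin shell whose $L^1$ mass is governed precisely by the hypothesis \eqref{cd:un1}.

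The second step is the quantitative estimate. Fix a point $x \in B_{1/2}$, say, and write
\begin{equation}
u_\epsilon(x) = \int_{B_1 \setminus B_{1-\epsilon}} P_{1-\epsilon}(x,y)\, u_\epsilon(y)\, dy.
\end{equation}
From the explicit formula \eqref{P} for the Poisson kernel of $B_{1-\epsilon}$, for $x$ in a fixed compact subset of $B_{1-\epsilon}$ and $y$ in the shell of width $\epsilon$ one has $P_{1-\epsilon}(x,y) \leqslant C (|y| - (1-\epsilon))^{-s} \cdot \epsilon^{s}\cdot$ (bounded factors), more precisely $P_{1-\epsilon}(x,y) \approx c\,((1-\epsilon)^2 - |x|^2)^s (|y|^2 - (1-\epsilon)^2)^{-s} |x-y|^{-n}$, and the singular factor $(|y|^2-(1-\epsilon)^2)^{-s}$ is integrable in $y$ over the shell because $0<s<1$. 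Integrating the kernel bound against $|u_\epsilon(y)|$ and using that $\|u_\epsilon\|_{L^1(B_1\setminus B_{1-\epsilon})} \leqslant \|u\|_{L^1(\{\delta \leqslant 2\epsilon\})}$, one obtains $|u_\epsilon(x)| \leqslant C(x)\, \epsilon^{-s}\, \|u\|_{L^1(\{\delta\leqslant c\epsilon\})}$, whose right-hand side tends to $0$ as $\epsilon \to 0$ by exactly \eqref{cd:un1}. Since $u_\epsilon \to u$ in $L^1_{\mathrm{loc}}(B_1)$ (standard mollifier convergence), passing to a subsequence gives $u_\epsilon \to u$ a.e., hence $u = 0$ a.e. on $B_{1/2}$; the same argument with a dilation covers every compact subset of $B_1$, giving $u = 0$ a.e. in $B_1$.

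The main obstacle I anticipate is the bookkeeping in the mollification step: one must justify that $(-\Delta)^s$ commutes with $J_\epsilon$ on the level of distributions and that the mollified function is genuinely $s$-harmonic on $B_{1-\epsilon}$ (not merely on a smaller ball), and one must track how the $L^1$ mass of $u$ near $\partial B_1$ — which is what \eqref{cd:un1} controls — relates to the shell $B_1 \setminus B_{1-\epsilon}$ after mollification; the width of the relevant shell is comparable to $\epsilon$, so the scaling $\epsilon^{-s}$ in the kernel estimate matches the $\epsilon^{-s}$ in \eqref{cd:un1} exactly, which is why the condition is sharp. A secondary technical point is handling the Poisson kernel of $B_{1-\epsilon}$ rather than $B_1$; this is only a rescaling of \eqref{P}, so all constants can be taken uniform in $\epsilon$ for $\epsilon$ small, but it should be stated carefully. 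Everything else — the integrability of the kernel's shell singularity, a.e.\ convergence along a subsequence, the covering by dilated balls — is routine.
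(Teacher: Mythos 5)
Your overall strategy is exactly the paper's: mollify, note that $u_\epsilon=J_\epsilon u$ is $s$-harmonic in $B_{1-\epsilon}$ and supported in $B_{1+\epsilon}$, represent it by the Poisson kernel of $B_{1-\epsilon}$, and show the Poisson integral over the thin shell is $O(\epsilon^{-s}\|u\|_{L^1(\{\delta\leqslant c\epsilon\})})$, which condition \eqref{cd:un1} kills. However, there is one genuine gap in how you derive the central inequality $|u_\epsilon(x)|\leqslant C(x)\,\epsilon^{-s}\|u\|_{L^1(\{\delta\leqslant c\epsilon\})}$. You propose to get it by ``integrating the kernel bound against $|u_\epsilon(y)|$'' together with $\|u_\epsilon\|_{L^1(\mathrm{shell})}\leqslant\|u\|_{L^1(\{\delta\leqslant 2\epsilon\})}$. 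That pairing requires $\sup_y P_{1-\epsilon}(x,y)\leqslant C\epsilon^{-s}$ on the shell, which is false: for $x$ in a fixed compact set the kernel behaves like $(|y|-(1-\epsilon))^{-s}$ and blows up as $|y|\downarrow 1-\epsilon$, so its supremum over the shell is infinite. The opposite H\"older pairing also fails: you correctly note the singular factor is integrable, with $\int_{\mathrm{shell}}(|y|-(1-\epsilon))^{-s}\,dy\approx\epsilon^{1-s}$, but pairing this with $\|u_\epsilon\|_{L^\infty}\leqslant C\epsilon^{-n}\|u\|_{L^1(\{\delta\leqslant 2\epsilon\})}$ gives $\epsilon^{1-s-n}$, which is off by $\epsilon^{1-n}$ and useless for $n\geqslant 2$.

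The fix --- and this is precisely the computation the paper carries out --- is to unwind the mollification and apply Fubini before estimating: write $u_\epsilon(y)=\int_{B_1\backslash B_{1-2\epsilon}}j_\epsilon(y-z)u(z)\,dz$, exchange the order of integration, and for each fixed $z$ estimate
\begin{equation*}
\int_{(B_{1+\epsilon}\backslash B_{1-\epsilon})\cap B_\epsilon(z)}\big(|y|-1+\epsilon\big)^{-s}\,j_\epsilon(y-z)\,dy
\;\leqslant\; C\epsilon^{-n}\int_{(B_{1+\epsilon}\backslash B_{1-\epsilon})\cap B_\epsilon(z)}\big(|y|-1+\epsilon\big)^{-s}\,dy
\;\leqslant\; C\epsilon^{-n}\cdot\epsilon^{n-s},
\end{equation*}
because the singular integral is now localized to the ball $B_\epsilon(z)$ (radial extent $\leqslant 2\epsilon$, transverse measure $\leqslant C\epsilon^{n-1}$). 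This yields a bound uniform in $z$ of order $\epsilon^{-s}$, and hence $|u_\epsilon(x)|\leqslant M(r)\epsilon^{-s}\int_{B_1\backslash B_{1-2\epsilon}}|u(z)|\,dz$ for $x\in B_r$, after which your conclusion via \eqref{cd:un1}, $u_\epsilon\to u$ in $L^1_{\mathrm{loc}}$, and $r\to 1^-$ goes through verbatim. Two smaller points: the representation integral should run over $B_{1+\epsilon}\backslash B_{1-\epsilon}$ (the support of $u_\epsilon$ reaches outside $B_1$), and the rescaled Poisson kernel and the justification that $J_\epsilon$ commutes with $(-\Delta)^s$ on distributions are routine, as you say.
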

\begin{proof}
First we do the mollify, let $u_\epsilon(x)=J_\epsilon u(x)$. Then we know $u_\epsilon\in C_0^\infty(\mathbb{R}^n)$ satisfies the following equations
\begin{equation}
\left\{\begin{aligned}
(-\Delta)^s u_\epsilon&=0\ \ &\text{in}\ &B_{1-\epsilon}\\
u_\epsilon&=0\ \ &\text{in}\ &\mathbb{R}^n\backslash B_{1+\epsilon}
\end{aligned}\right.
\end{equation}

Then we can  represent $u$,  by using a rescaled version of Lemma~\ref{exi1}, as
\begin{equation}
u_\epsilon(x)
=\int_{\mathbb{R}^n\backslash B_{1-\epsilon}}P_{1-\epsilon}(x,y)u_\epsilon(y)\ dy,\ \text{for}\ x\in B_{1-\epsilon},
\end{equation}
where $P_r(x,y)$ is the rescaled Poisson kernel:
\begin{equation}
P_{r}(x,y)=c(n,s)\left(\frac{r^2-|x|^2}{|y|^2-r^2}\right)^s \frac{1}{|x-y|^n},\ \ \text{for}\ x\in B_r\ \text{and}\ y\in \mathbb{R}^n\backslash B_r,
\end{equation}
Now for a given $r\in (0,1)$, we choose $0<\epsilon<\frac{1-r}{3}$ and estimate $\|u_\epsilon\|_{L^\infty(B_r)}$. Indeed for $x\in B_r$,
\begin{equation}
\begin{aligned}
|u_\epsilon(x)|
&=c(n,s)\left|\int_{\mathbb{R}^n\backslash B_{1-\epsilon}} \left(\frac{(1-\epsilon)^2-|x|^2}{|y|^2-(1-\epsilon)^2}\right)^s \frac{u_\epsilon(y)}{|x-y|^n}\ dy\right|\\
&=c(n,s)\left|\int_{B_{1+\epsilon}\backslash B_{1-\epsilon}} \left(\frac{(1-\epsilon)^2-|x|^2}{|y|^2-(1-\epsilon)^2}\right)^s \frac{1}{|x-y|^n}\int_{B_1\backslash B_{1-2\epsilon}}u(z)j_\epsilon(y-z)\ dz dy\right|\\
&\leqslant c(n,s)\int_{B_1\backslash B_{1-2\epsilon}}\left[\int_{B_{1+\epsilon}\backslash B_{1-\epsilon}} \left(\frac{(1-\epsilon)^2-|x|^2}{|y|^2-(1-\epsilon)^2}\right)^s \frac{ j_\epsilon(y-z)}{|x-y|^n}\ dy\right] |u(z)|\ dz\\
&\leqslant c(n,s)\int_{B_1\backslash B_{1-2\epsilon}}\left[\int_{B_{1+\epsilon}\backslash B_{1-\epsilon}} \left(\frac{1}{(|y|-1+\epsilon)}\right)^s \frac{C\chi_{|y-z|\leqslant\epsilon}}{\epsilon^n(\frac{1-r}{3})^n}\ dy\right] |u(z)|\ dz\\
&\leqslant M(r)\frac{1}{\epsilon^n}\int_{B_1\backslash B_{1-2\epsilon}}\left[\int_{(B_{1+\epsilon}\backslash B_{1-\epsilon})\cap B_\epsilon(z)} \left(\frac{1}{|y|-1+\epsilon}\right)^s\ dy\right] |u(z)|\ dz\\
&\leqslant M(r)\frac{1}{\epsilon^s}\int_{B_1\backslash B_{1-2\epsilon}} |u(z)|\ dz.
\end{aligned}
\end{equation}
i.e.
\begin{equation}\label{ieq1}
\|u_\epsilon\|_{L^\infty(B_r)}\leqslant M(r)\frac{1}{\epsilon^s}\int_{B_1\backslash B_{1-2\epsilon}} |u(z)|\ dz.
\end{equation}

Noting the condition \eqref{cd:un1}, one obtain:
\begin{equation}\label{eq1}
\|u\|_{L^\infty(B_r)}=0,
\end{equation}
by letting $\epsilon\rightarrow0^+$ in \eqref{ieq1}.

Since $r$ in \eqref{eq1} is arbitrary, one can drive $r\rightarrow1^-$ in \eqref{eq1} and derive $\|u\|_{L^\infty(B_1)}=0$.
\end{proof}

\begin{theorem}[Existence]\label{cor2}
Let $0<s<1$  and suppose $f\in  L^1_s(B_1)$. Then $u=G\ast f$ solves the problem \eqref{pb1}. Furthermore, $u$ satisfies the uniqueness condition \eqref{cd:un1}.
\end{theorem}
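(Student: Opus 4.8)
The plan is to verify the two assertions separately: first that $u = G*f$ is a genuine distributional solution of \eqref{pb1}, and then that it satisfies the boundary vanishing condition \eqref{cd:un1}. For the first part, the natural strategy is approximation by the regular case already handled in Lemma \ref{exi0}. I would take a sequence $f_k \in C^{2s+\epsilon}(B_1)\cap C(\overline{B_1})$ approximating $f$ in $L^1_s(B_1)$ (for instance mollifications truncated away from $\partial B_1$), let $u_k = G*f_k$ be the continuous solutions guaranteed by Lemma \ref{exi0}, and pass to the limit. The key technical ingredient is the kernel bound in Lemma \ref{est3}: one shows that the map $f \mapsto G*f$ is bounded from $L^1_s(B_1)$ into $\mathcal{L}_{2s}$ (and into $L^1(B_1)$), so that $u_k \to u$ in $\mathcal{L}_{2s}$, whence $(-\Delta)^s u_k \to (-\Delta)^s u$ in $\mathcal{D}'(B_1)$ by the duality pairing \eqref{eq3}; since $(-\Delta)^s u_k = f_k \to f$ in $L^1_s \hookrightarrow L^1_{\mathrm{loc}}(B_1)$, we conclude $(-\Delta)^s u = f$ in $\mathcal{D}'(B_1)$. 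That $u = 0$ in $\mathbb{R}^n\setminus B_1$ is immediate from the definition \eqref{Gf} of $G*f$.

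For the second part — condition \eqref{cd:un1} — the heart of the matter is a pointwise estimate on $|u(x)| = |G*f(x)|$ in terms of the distance $\delta(x) = 1-|x|$. Using Lemma \ref{est3}, write
\begin{equation}
|G*f(x)| \leqslant C\int_{B_1} \frac{1}{|x-y|^{n-2s}}\min\left\{\left[\frac{\delta(x)\delta(y)}{|x-y|^2}\right]^s,1\right\}|f(y)|\, dy.
\end{equation}
The strategy is to show that $\int_{\{\delta(x)\leqslant\epsilon\}}|u(x)|\,dx = o(\epsilon^s)$. Splitting the $y$-integral according to whether $\delta(y)$ is small or not, and using $|f|\,\delta^s \in L^1(B_1)$, one estimates the contribution of $\{\delta(y)\geqslant\eta\}$ (which is controlled by $\epsilon^s$ times $\|f\|_{L^1_s}$ uniformly, via the $\min\{\cdots,1\}\leqslant [\delta(x)\delta(y)/|x-y|^2]^s \leqslant \epsilon^s \eta^{-s}$ bound away from the diagonal after a Fubini argument on the boundary layer) and the contribution of $\{\delta(y)<\eta\}$ (which is $O(\eta^s)\cdot\|f\|_{L^1_s}$-type small by absolute continuity of the integral, after bounding $\int_{\{\delta(x)\leqslant\epsilon\}}\frac{1}{|x-y|^{n-2s}}\min\{\cdots,1\}\,dx$ by a constant times $\delta(y)^s$ uniformly in $\epsilon$). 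Letting $\epsilon\to 0$ first and then $\eta\to 0$ gives \eqref{cd:un1}. A cleaner alternative is to prove the bound $\frac{1}{\epsilon^s}\int_{\{\delta(x)\leqslant\epsilon\}}|G*f(x)|\,dx \leqslant C\int_{\{\delta(y)\leqslant C\epsilon\}}\delta(y)^s|f(y)|\,dy + o(1)\|f\|_{L^1_s}$ by Tonelli, so that the right side tends to $0$ by dominated convergence.

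The main obstacle is this last uniform estimate: controlling $\int_{\{\delta(x)\leqslant\epsilon\}} G(x,y)\,dx$ by $C\,\delta(y)^s$ with a constant independent of $\epsilon$ and of $y$, and doing so in a way that also captures the vanishing as $\epsilon\to 0$. This requires a careful case analysis of the integral of the Green kernel over a thin boundary strip, distinguishing the regimes $|x-y|\lesssim\delta(y)$, $\delta(y)\lesssim|x-y|\lesssim 1$, and the behavior of the factor $\min\{[\delta(x)\delta(y)/|x-y|^2]^s,1\}$; the singularity $|x-y|^{-(n-2s)}$ is integrable since $n-2s<n$, but one must track the weights precisely. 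Once this boundary-layer estimate for $G$ is in hand, both the membership $u\in L^1(B_1)\cap\mathcal{L}_{2s}$ and condition \eqref{cd:un1} follow, and combined with Theorem \ref{thm:un} this completes the existence-and-uniqueness picture asserted in Theorem \ref{cor1}.
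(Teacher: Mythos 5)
Your proposal follows essentially the same route as the paper: Tonelli to reduce \eqref{cd:un1} to the boundary-layer estimate $\frac{1}{\epsilon^s}\int_{\{\delta(x)\leqslant\epsilon\}}G(x,y)\,dx\leqslant C\delta(y)^s$ (with pointwise vanishing as $\epsilon\to0$) followed by dominated convergence, and then approximation of $f$ by regular data so that Lemma \ref{exi0} and the resulting $L^1$-bound $\|G\ast f\|_{L^1(B_1)}\leqslant C\|f\|_{L^1_s(B_1)}$ let you pass to the distributional limit. The one piece you describe but do not execute --- the case analysis of $\int_{\{\delta(x)\leqslant\epsilon\}}G(x,y)\,dx$ in the regimes $\delta(y)\lesssim\epsilon$ and $\delta(y)\gtrsim\epsilon$ --- is exactly the computation the paper carries out via Lemma \ref{est3}, and your sketch of it is the correct one.
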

\begin{proof}
We first show that such $u$ satisfies the condition \eqref{cd:un1}.

Indeed, for $\epsilon<\frac{1}{8}$, one calculates:
\begin{equation}
\begin{aligned}
  &\frac{1}{\epsilon^s}\int_{B_1\backslash B_{1-\epsilon}}|u(x)| dx\\
  \leqslant& \frac{1}{\epsilon^s}\int_{B_1\backslash B_{1-\epsilon}}\left[\int_{B_1}G(x,y)|f(y)|dy\right]dx\\
  =&\int_{B_1}\underbrace{\left[\frac{1}{\epsilon^s}\int_{B_1\backslash B_{1-\epsilon}}G(x,y)dx\right]}_{\text{denote\ as}\ A_\epsilon(y)}|f(y)|dy\\
\end{aligned}
\end{equation}
Utilizing Lemma \ref{est3}, one estimates $A_\epsilon$ as follow:
\begin{itemize}
  \item If $y\in B_1\backslash B_{1-2\epsilon}$
  \begin{equation}
    \begin{aligned}
    A_\epsilon (y)&\leqslant \frac{1}{\epsilon^s}\int_{B_1\backslash B_{1-4\epsilon}}G(x,y)dx\\
    &\leqslant\frac{C}{\epsilon^s}\left[\int_{(B_1\backslash B_{1-4\epsilon})\backslash B_{\frac{1-|y|}{2}}(y)}\frac{(1-|x|)^s(1-|y|)^s}{|x-y|^n}dx+\int_{B_{\frac{1-|y|}{2}}(y)}\frac{1}{|x-y|^{n-2s}}dx\right]\\
    &\leqslant \frac{C(1-|y|)^s}{\epsilon^s}\int_{0}^{4\epsilon}\int_{\partial B_{1-t}\backslash B_{\frac{1-|y|}{2}}(y) }\frac{t^s}{|x-y|^n}d\sigma_xdt+C\frac{(1-|y|)^{2s}}{\epsilon^s}\\
    &\leqslant \frac{C(1-|y|)^s}{\epsilon^s}\left[\int_{0}^{2(1-|y|)}\frac{t^s}{1-|y|}dt +\int_{2(1-|y|)}^{4\epsilon}\frac{t^s}{t-(1-|y|)}dt \right]
    +C\frac{(1-|y|)^{2s}}{\epsilon^s} \\
    &\leqslant C(1-|y|)^s.
    \end{aligned}
  \end{equation}
  \item If  $y\in B_{1-2\epsilon}$
  \begin{equation}
  \begin{aligned}
    A_\epsilon (y)&\leqslant \frac{C}{\epsilon^s}\int_{B_1\backslash B_{1-\epsilon}}\frac{(1-|x|)^s(1-|y|)^s}{|x-y|^n}dx\\
    &=\frac{C(1-|y|)^s}{\epsilon^s}\int_{0}^\epsilon\int_{\partial B_{1-t}}\frac{t^s}{|x-y|^n}d\sigma_xdt\\
    &\leqslant \frac{C(1-|y|)^s}{\epsilon^s}\int_{0}^\epsilon\frac{t^s}{1-|y|}dt\quad\left(\text{since}\ \operatorname{dist}(y,\partial B_{1-t})\geqslant\frac{1-|y|}{2}\right)\\
    &\leqslant C\frac{\epsilon}{1-|y|}(1-|y|)^s.
    \end{aligned}
  \end{equation}
\end{itemize}
Therefore, $A_\epsilon(y)\leqslant C(1-|y|)^s$ and $A_\epsilon(y)\rightarrow 0$ as $\epsilon\rightarrow 0$.

Now, we know $A_\epsilon(y)|f(y)|\leqslant \delta(y)^s|f(y)|\in L^1(B_1)$ and $A_\epsilon(y)|f(y)|\rightarrow 0$ as $\epsilon\rightarrow 0$, the Lebesgue convergence theorem assures that
\begin{equation}
  \limsup_{\epsilon\rightarrow 0}\frac{1}{\epsilon^s}\int_{B_1\backslash B_{1-\epsilon}}|u(x)| dx\leqslant \lim_{\epsilon\rightarrow 0}\int_{B_1}A_\epsilon(y)|f(y)|dy=0.
\end{equation}
I.e. $u$ satisfies the uniqueness condition \eqref{cd:un1}.

The above calculation also gives us a basic estimate:
\begin{equation}
\label{est4}
  \|G\ast f\|_{L^1(B_1)}\leqslant C\|f\|_{L^1_s(B_1)}.
\end{equation}

Since $f\in  L^1_s(B_1)$, we can choose $f_k\in C^\infty(B_1)\cap L^1_s(B_1)$ that converges to $f$ in $ L^1_s(B_1)$ as $k\rightarrow\infty$.
According to Theorem~\ref{exi0}, there exists a corresponding solution sequence $\{u_k=G\ast f_k\}$.
\eqref{est4} implies that $u_k$ converges to $u$ in $L^1(B_1)$ as $k\rightarrow\infty$. Hence, $u$ as the distributional limit of $u_k$ satisfies \eqref{pb1}.
\end{proof}
\begin{remark}
Theorem~\ref{cor1} is a direct corollary of Theorem \ref{thm:un} and Theorem \ref{cor2}
\end{remark}
\section{Embedding of $\mathscr{W}^{\alpha,p}_{r,0}(B_1)$ into other spaces}\label{emb}
In this section, we want to give some embedding estimates for our newly defined `weighted Sobolev' spaces in Section \ref{2}. While, these estimates are essential during our proof of the solvability theorem, they also shows that the spaces we define in Section \ref{2} do make sense. Indeed, we can see these spaces appears to satisfy a similar embedding property as the ordinary Sobolev space with only a little bit adjustment.

From Theorem \ref{cor1}, we know: for $0<r\leqslant s$, $u\in\mathscr{W}^{2s,p}_{r,0}(B_1)$ is equivalent to saying $u=G\ast f$ for some $f\in L^p_r$. Hence to show the embedding theorems, we only need to estimate the Green's potential $G\ast f$.

To begin with, we need a technical lemma:
\begin{lemma}\label{lem0}
For $x,y\in B_1$, $0<\beta<1$ and $-\beta\leqslant\alpha\leqslant\beta$.
\begin{equation}
\min\left\{\bigg[\frac{(1-|x|)(1-|y|)}{|x-y|^2}\bigg]^\beta,1\right\}\leqslant 4\left(\frac{1-|y|}{1-|x|}\right)^\alpha.
\end{equation}
\end{lemma}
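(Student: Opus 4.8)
The plan is to reduce the claim to a single inequality that no longer involves $\alpha$, and then settle it by an elementary case analysis. Write $a=1-|x|$, $b=1-|y|$, $d=|x-y|$, and $m=\min\{a,b\}$, $M=\max\{a,b\}$; note $a,b\in(0,1)$ and, by the reverse triangle inequality, $|a-b|\le d$. Since the left-hand side of the asserted bound does not depend on $\alpha$, it is enough to bound it by $4$ times the minimum of $\alpha\mapsto(b/a)^{\alpha}$ over $[-\beta,\beta]$; this map is monotone, so its minimum is attained at an endpoint and equals $(m/M)^{\beta}$ in either case $b\ge a$ or $b\le a$. Thus the first step reduces the lemma to
\begin{equation*}
\min\left\{\left(\frac{mM}{d^{2}}\right)^{\beta},\,1\right\}\ \le\ 4\left(\frac{m}{M}\right)^{\beta}.
\end{equation*}

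Next I would split according to the sign of $mM-d^{2}$. If $mM\ge d^{2}$, the left-hand side is $1$; combining with $d\ge M-m$ gives $mM\ge(M-m)^{2}$, hence $3mM\ge M^{2}+m^{2}\ge M^{2}$, so $M\le 3m$, and then $4(m/M)^{\beta}\ge 4\cdot 3^{-\beta}>1$ since $0<\beta<1$. If instead $mM\le d^{2}$, the left-hand side equals $(mM/d^{2})^{\beta}$, and after dividing by $(m/M)^{\beta}$ it suffices to prove $d\ge M\,2^{-1/\beta}$. For this I would use the two lower bounds $d\ge M-m$ and $d\ge\sqrt{mM}$ (the latter being exactly the running hypothesis): when $m\le M/2$ one has $d\ge M-m\ge M/2\ge M\,2^{-1/\beta}$ because $2^{-1/\beta}\le 1/2$ for $\beta<1$; and when $m>M/2$ one has $d\ge\sqrt{mM}>M/\sqrt{2}\ge M\,2^{-1/\beta}$ because $2^{-1/\beta}\le 2^{-1/2}$ for $\beta\le 2$. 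This exhausts all cases.

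I do not expect a genuine obstacle here: the whole argument is elementary and self-contained once the right reformulation is in place. The only points that take a little care are the initial passage to the symmetric form with $(m/M)^{\beta}$ on the right — which is what removes the dependence on $\alpha$ — and keeping track of the exponent in the second case, where the implicit constant $4^{1/\beta}$ grows unboundedly as $\beta\to0$ but is nevertheless beaten, for every $\beta\in(0,1)$, by the two elementary lower bounds on $d$. One could push the constant below $4$ with more bookkeeping, but $4$ is all that the later estimates on $G\ast f$ need.
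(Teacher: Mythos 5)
Your proof is correct. It shares the paper's basic strategy --- reduce to the extremal exponent and then run an elementary case analysis powered by the reverse triangle inequality $|x-y|\geqslant\big|(1-|x|)-(1-|y|)\big|$ --- but the cases are organized differently. The paper first invokes the $x\leftrightarrow y$ symmetry to assume $\alpha\geqslant0$, reduces to $\alpha=\beta$ by monotonicity of $t\mapsto t^{\alpha}$ on $[0,1]$, and then splits on whether $1-|x|\geqslant2(1-|y|)$: in that case $|x-y|\geqslant\frac12(1-|x|)$ gives the bound at once, and otherwise the minimum is crudely bounded by $1\leqslant2^{\alpha}\big((1-|y|)/(1-|x|)\big)^{\alpha}$. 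You instead pass to the symmetric variables $m,M$ and identify the worst-case right-hand side $4(m/M)^{\beta}$ explicitly, then split on which term attains the minimum on the left; this costs you the extra sub-split $m\leqslant M/2$ versus $m>M/2$ and the additional lower bound $d\geqslant\sqrt{mM}$, so your argument is marginally longer. What it buys is a cleaner view of why the hypothesis $|\alpha|\leqslant\beta$ is exactly the right one: the dependence on $\alpha$ is stripped out at the start by minimizing $(b/a)^{\alpha}$ over the allowed range, whereas the paper handles the two signs of $\alpha$ by symmetry and monotonicity. All the individual estimates in your write-up check out ($M\leqslant3m$ from $mM\geqslant(M-m)^{2}$, and $d\geqslant M\,2^{-1/\beta}$ from the two lower bounds on $d$), so there is no gap.
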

\begin{proof}
Notice the symmetry of $x$ and $y$, we only need to prove for the case $\alpha\geqslant0$.
Notice also that
\begin{equation}
\min\left\{\bigg[\frac{(1-|x|)(1-|y|)}{|x-y|^2}\bigg]^\beta,1\right\}\leqslant \min\left\{\bigg[\frac{(1-|x|)(1-|y|)}{|x-y|^2}\bigg]^\alpha,1\right\}
\end{equation}
when $0\leqslant\alpha\leqslant\beta$. It suffices to prove for the case $\alpha=\beta$.

Consider the following two cases:
\begin{enumerate}
\item If $1-|x|\geqslant 2(1-|y|)$, then $|x-y|\geqslant (1-|x|)-(1-|y|)\geqslant \frac{1}{2}(1-|x|)$, and hence
\begin{equation}
\min\left\{\bigg[\frac{(1-|x|)(1-|y|)}{|x-y|^2}\bigg]^\alpha,1\right\}
\leqslant \bigg[\frac{(1-|x|)(1-|y|)}{|x-y|^2}\bigg]^\alpha\leqslant 4\bigg(\frac{1-|y|}{1-|x|}\bigg)^\alpha.
\end{equation}
\item If $1-|x|\leqslant 2(1-|y|)$, then
\begin{equation}
\min\left\{\bigg[\frac{(1-|x|)(1-|y|)}{|x-y|^2}\bigg]^\alpha,1\right\}
\leqslant  1\leqslant 2^\alpha\left(\frac{1-|y|}{1-|x|}\right)^\alpha\leqslant 4\left(\frac{1-|y|}{1-|x|}\right)^\alpha.
\end{equation}
\end{enumerate}
\end{proof}
\subsection{Embedding results}
\begin{theorem}\label{est1}
Let $0<s<1$, $-s\leqslant r\leqslant s$, $1\leqslant p<\infty$, and suppose $u=G\ast f$ for $f\in  L^p_r(B_1)$.
 Then we  have the following estimates:
\begin{enumerate}
\item If $p=1$, then for any $q$ satisfying $1\leqslant q<\frac{n}{n-2s}$,
$u\in  L^q_r(B_1)$ with
\begin{equation}
\|u\|_{ L^q_r(B_1)}\leqslant C\|f\|_{ L^1_r(B_1)},
\end{equation}
where $C$ is a positive constant depending only on $n,s,q$.
\item If $1<p<\frac{n}{2s}$, then $u\in  L^{\frac{np}{n-2sp}}_r(B_1)$ with
\begin{equation}
\|u\|_{ L^{\frac{np}{n-2sp}}_r(B_1)}\leqslant C\|f\|_{ L^p_r(B_1)},
\end{equation}
where $C$ is a positive constant depending only on $n,s,p$.
\item If $p>\frac{n}{2s}$, then $u\in  L^\infty_r(B_1)$ with
\begin{equation}
\|u\|_{ L^\infty_r(B_1)}\leqslant C\|f\|_{ L^p_r(B_1)},
\end{equation}
where $C$ is a positive constant depending only on $n,s,p$.
\end{enumerate}
\end{theorem}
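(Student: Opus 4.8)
The plan is to reduce everything to a weighted estimate on the Green's potential operator $f \mapsto G*f$ and then invoke the classical machinery for the Riesz potential. The starting point is Lemma~\ref{est3} together with Lemma~\ref{lem0}: applying Lemma~\ref{lem0} with $\beta = s$ and $\alpha = r$ (legitimate since $-s \leqslant r \leqslant s$) gives the pointwise bound
\begin{equation}
G(x,y) \leqslant \frac{C}{|x-y|^{n-2s}} \left(\frac{1-|y|}{1-|x|}\right)^{r},
\end{equation}
so that $\delta(x)^r G(x,y) \leqslant C\, |x-y|^{2s-n}\, \delta(y)^r$. Consequently, writing $g(y) = \delta(y)^r |f(y)| = $ the integrand whose $L^p$-norm is $\|f\|_{L^p_r(B_1)}$, we obtain
\begin{equation}
\delta(x)^r |u(x)| \leqslant C \int_{B_1} \frac{g(y)}{|x-y|^{n-2s}}\, dy = C\, (I_{2s}\, \tilde g)(x),
\end{equation}
where $\tilde g$ is the zero-extension of $g$ to $\mathbb{R}^n$ and $I_{2s}$ is the Riesz potential of order $2s$. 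This is the key reduction; after it, the problem is entirely about $I_{2s}$ acting on $L^p(\mathbb{R}^n)$.

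From there I would simply quote the Hardy--Littlewood--Sobolev inequality and its endpoint companions. For case (2), $1 < p < \tfrac{n}{2s}$, the strong $(p, \tfrac{np}{n-2sp})$ boundedness of $I_{2s}$ gives $\|\delta^r u\|_{L^{np/(n-2sp)}} \leqslant C \|I_{2s}\tilde g\|_{L^{np/(n-2sp)}} \leqslant C\|g\|_{L^p} = C\|f\|_{L^p_r(B_1)}$, which is exactly the claimed bound. For case (3), $p > \tfrac{n}{2s}$, one uses that $I_{2s}$ maps $L^p(\mathbb{R}^n)$ into $L^\infty$ provided the kernel is suitably truncated; since $\tilde g$ is supported in the bounded set $B_1$, the tail of $|x-y|^{2s-n}$ is harmless and Hölder's inequality directly yields $\|\delta^r u\|_{L^\infty(B_1)} \leqslant C \|g\|_{L^p} = C\|f\|_{L^p_r(B_1)}$, the constant depending on $n, s, p$ and the fixed radius $1$. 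For case (1), $p = 1$, the operator $I_{2s}$ is of weak type $(1, \tfrac{n}{n-2s})$, and on the bounded domain $B_1$ the inclusion $L^{n/(n-2s),\infty}(B_1) \hookrightarrow L^q(B_1)$ for every $q < \tfrac{n}{n-2s}$ converts this into the stated strong estimate $\|\delta^r u\|_{L^q(B_1)} \leqslant C\|g\|_{L^1} = C\|f\|_{L^1_r(B_1)}$.

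I expect the main point requiring care to be the application of Lemma~\ref{lem0}, specifically checking the sign conventions and the parameter ranges: Lemma~\ref{lem0} produces the factor $\big(\tfrac{1-|y|}{1-|x|}\big)^\alpha$ with $\alpha$ free in $[-\beta,\beta]$, and one must pick $\alpha = r$ so that the $\delta(x)^r$ weight on the left is absorbed and only a $\delta(y)^r$ remains on the right, with no residual negative power of $1-|x|$ surviving. Once that algebraic bookkeeping is done the rest is a textbook invocation of Hardy--Littlewood--Sobolev, and the only mild subtlety is that we are on $B_1$ rather than all of $\mathbb{R}^n$, which only helps (it lets us downgrade weak-type to strong-type in case (1) and lets Hölder close case (3)). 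Finally I would note that $u$ vanishes outside $B_1$, so all the $L^q_r$ and $L^\infty_r$ norms over $B_1$ are the same as over $\mathbb{R}^n$, and the three cases together exhaust the stated range of $p$.
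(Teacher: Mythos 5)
Your proposal is correct and follows essentially the same route as the paper: both reduce the weighted estimate via Lemma~\ref{est3} and Lemma~\ref{lem0} (with $\beta=s$, $\alpha=r$) to the pointwise bound $\delta(x)^r G(x,y)\leqslant C\,\delta(y)^r|x-y|^{2s-n}$ and then conclude by Hardy--Littlewood--Sobolev. You merely spell out the three cases (weak-type plus bounded domain for $p=1$, strong HLS for the middle range, H\"older for large $p$) that the paper compresses into a single sentence.
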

\begin{proof}
Hence \begin{equation}
G(x,y)\leqslant \underbrace{\frac{C_0}{|x-y|^{n-2s}}\min\left\{\bigg[\frac{(1-|x|)(1-|y|)}{|x-y|^2}\bigg]^s,1\right\}\leqslant 4C_0\frac{(1-|y|)^{r}}{|x-y|^{n-2s}(1-|x|)^{r}}}_{\mathrm{by\ Lemma~\ref{lem0}}}.
\end{equation}
For $x\in B_1$, we have
\begin{equation}
\begin{aligned}
(1-|x|)^{r}|G\ast f(x)|&=\bigg|(1-|x|)^{r}\int_{B_1}G(x,y)f(y)\ dy \bigg|\\
&\leqslant (1-|x|)^{r}\int_{B_1}G(x,y)|f(y)|\ dy \\
&\leqslant 4C_0\int_{B_1}\frac{(1-|y|)^{r}}{|x-y|^{n-2s}}|f(y)|\ dy.
\end{aligned}
\end{equation}
The Hardy-Littlewood-Sobolev inequality gives directly the desired result.
\end{proof}

\begin{remark}
Theorem~\ref{est1} indicates that for $0<s<1$, $-s\leqslant r\leqslant s$ and $1\leqslant p<\infty$, the following embedding holds:
\begin{equation}
\mathscr{W}^{2s,p}_{r,0}(B_1)\hookrightarrow\left\{
\begin{aligned}
& L^{q}_r(B_1)&p=1,\quad1\leqslant q<\frac{n}{n-2s},\\
& L^{\frac{np}{n-2sp}}_r(B_1)&1<p<\frac{n}{2s},\\
& L^{\infty}_r(B_1)&p>\frac{n}{2s}.
\end{aligned}\right.
\end{equation}
\end{remark}

\begin{theorem}\label{est2}
Let $\frac{1}{2}<s<1$, $1-s\leqslant r\leqslant s$, $1\leqslant p\leqslant\infty$, and suppose $u=G\ast f$ for  $f\in  L^p_r(B_1)$.
 Then we  have the following estimates:
\begin{enumerate}
\item If $p=1$, then for any $q$ satisfying $1\leqslant q<\frac{n}{n-2s+1}$,
$|\nabla u|\in  L^q_r(B_1)$ with
\begin{equation}
\||\nabla u|\|_{ L^q_r(B_1)}\leqslant C\|f\|_{ L^1_r(B_1)},
\end{equation}
where  $C$ is a positive constant depending only on $n,s,q$.
\item If $1<p<\frac{n}{2s-1}$, then $|\nabla u|\in  L^{\frac{np}{n-(2s-1)p}}_r(B_1)$ with
\begin{equation}
\||\nabla u|\|_{ L^{\frac{np}{n-(2s-1)p}}_r(B_1)}\leqslant C\|f\|_{ L^p_r(B_1)},
\end{equation}
where $C$ is a positive constant depending only on $n,s,p$.
\item If $p>\frac{n}{2s-1}$, then $|\nabla u|\in  L^\infty_r(B_1)$ with
\begin{equation}
\||\nabla u|\|_{ L^\infty_r(B_1)}\leqslant C\|f\|_{ L^p_r(B_1)},
\end{equation}
where $C$ is a positive constant depending only on $n,s,p$.
\end{enumerate}
\end{theorem}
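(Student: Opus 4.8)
The plan is to proceed exactly as for Theorem~\ref{est1}: reduce the whole statement to one pointwise bound for the gradient of the Green function and then invoke Hardy--Littlewood--Sobolev. Concretely, I would first prove that for $1-s\le r\le s$ there is a constant $C=C(n,s,r)$ with
\begin{equation}\label{gradG}
\delta(x)^{r}\,|\nabla_x G(x,y)|\ \le\ \frac{C\,\delta(y)^{r}}{|x-y|^{\,n-2s+1}},\qquad x,y\in B_1,\ x\ne y .
\end{equation}
Granting \eqref{gradG}, differentiation under the integral sign in $u=G\ast f$ is legitimate (the right side of \eqref{gradG} times $|f(y)|$ is integrable in $y$, uniformly for $x$ in compact subsets of $B_1$, since $n-2s+1<n$ and $L^p_r(B_1)\subset L^1_r(B_1)$; alternatively, for smooth $f$ one uses Lemma~\ref{exi0} together with $2s>1$ to obtain a classical $C^{1}$ solution and then passes to the limit using the estimate itself). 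Hence
\[
\delta(x)^{r}\,|\nabla u(x)|\ \le\ C\int_{B_1}\frac{\delta(y)^{r}|f(y)|}{|x-y|^{\,n-2s+1}}\,dy\ =\ C\,(I_{2s-1}\,g)(x),\qquad g:=\delta^{r}|f|\,\chi_{B_1},
\]
where $g\in L^p(\mathbb R^n)$ with $\|g\|_{L^p}=\|f\|_{L^p_r(B_1)}$ and $I_{2s-1}$ is the Riesz potential of order $2s-1\in(0,1)$. The Hardy--Littlewood--Sobolev inequality then yields the three cases simultaneously: for $1<p<\tfrac n{2s-1}$, $I_{2s-1}\colon L^p\to L^{np/(n-(2s-1)p)}$; for $p=1$, $I_{2s-1}$ is of weak type $(1,\tfrac n{n-2s+1})$, which on the bounded set $B_1$ gives $L^q$ for every $q<\tfrac n{n-2s+1}$; and for $p>\tfrac n{2s-1}$ (in particular $p=\infty$) one has $\|I_{2s-1}g\|_{L^\infty}\le\|g\|_{L^p}\,\bigl\||\cdot|^{-(n-2s+1)}\bigr\|_{L^{p'}(B_2)}<\infty$ because $(n-2s+1)p'<n$.

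For \eqref{gradG} itself I would differentiate the explicit formula~\eqref{G}. Writing $\Phi(t)=\int_0^{t}\tau^{s-1}(1+\tau)^{-n/2}\,d\tau$ and $\rho=\rho(x,y)$,
\[
\nabla_x G=\kappa\bigl[\nabla_x|x-y|^{-(n-2s)}\bigr]\Phi(\rho)+\kappa\,|x-y|^{-(n-2s)}\,\Phi'(\rho)\,\nabla_x\rho .
\]
Using $\Phi(t)\le C\min\{t^{s},1\}$ (finite since $n>2s$), $\Phi'(t)=t^{s-1}(1+t)^{-n/2}$, the elementary estimates $|\nabla_x|x-y|^{-(n-2s)}|\le C|x-y|^{-(n-2s+1)}$ and $|\nabla_x\rho|=\rho\,|\nabla_x\log\rho|\le C\rho\bigl(\delta(x)^{-1}+|x-y|^{-1}\bigr)$, and the comparison $\rho\asymp\tilde\rho:=\delta(x)\delta(y)|x-y|^{-2}$, I would reach
\[
|\nabla_x G(x,y)|\ \le\ \frac{C\,\min\{\tilde\rho^{\,s},1\}}{|x-y|^{\,n-2s+1}}\ +\ \frac{C\,\tilde\rho^{\,s}\min\{1,\tilde\rho^{-n/2}\}}{\delta(x)\,|x-y|^{\,n-2s}} .
\]
Then I would split into the two exhaustive regimes. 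If $|x-y|\ge\tfrac12\max\{\delta(x),\delta(y)\}$ then $\tilde\rho\lesssim1$, both terms are $\le C\,\delta(x)^{s-1}\delta(y)^{s}|x-y|^{-n}$, and multiplying by $\delta(x)^r$ while using $\delta(x),\delta(y)\le C|x-y|$ together with the exponents $r+s-1\ge0$ and $s-r\ge0$ produces the right side of \eqref{gradG}. If $|x-y|<\tfrac12\max\{\delta(x),\delta(y)\}$, then $|\delta(x)-\delta(y)|\le|x-y|$ forces $\delta(x)\asymp\delta(y)$ and $\tilde\rho\gtrsim1$, both terms are $\le C|x-y|^{-(n-2s+1)}$ (for the second using $|x-y|\le\delta(x)$ and $n-2s+1>0$), and multiplying by $\delta(x)^r\asymp\delta(y)^r$ finishes. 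The hypothesis $\tfrac12<s<1$ enters precisely at these points: it makes the interval $[1-s,s]$ nonempty, and it makes $2s-1>0$ so that $I_{2s-1}$ is a genuine Riesz potential and classical solutions are $C^1$.

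The genuinely delicate step is \eqref{gradG}, and within it the contribution of $\nabla_x\rho$, which carries the factor $\delta(x)^{-1}$: this is the source of the $\delta(x)^{s-1}$-type blow-up of $\nabla_x G$ at $\partial B_1$ --- the same mechanism that produces the nontrivial solution~\eqref{nontvl} --- and it is exactly what forces the lower restriction $r\ge1-s$. The case split is designed to absorb that singular factor into the weight $\delta(x)^{r}$; everything downstream (the Riesz-potential estimates and the approximation step) is routine and parallels Theorem~\ref{est1}.
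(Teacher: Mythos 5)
Your proposal is correct and follows essentially the same route as the paper: differentiate the explicit Green function, absorb the singular factor $\delta(x)^{-1}$ coming from $\nabla_x\rho$ into the weight to obtain the pointwise bound $\delta(x)^r|\nabla_xG(x,y)|\leqslant C\delta(y)^r|x-y|^{-(n-2s+1)}$ (which is exactly where $1-s\leqslant r\leqslant s$ is used), and then conclude by the Hardy--Littlewood--Sobolev inequality. The only cosmetic difference is that you establish the kernel bound by a direct two-case comparison of $|x-y|$ with $\delta(x),\delta(y)$, whereas the paper packages the same case analysis into Lemma~\ref{lem0} and applies it twice with the parameter pairs $(r,s)$ and $(r-\tfrac12,s-\tfrac12)$.
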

\begin{proof}
We first calculate the gradient of the Green's function.
\begin{equation}
\frac{\nabla_x G(x,y)}{\kappa(n,s)}=-\frac{(n-2s)(x-y)}{|x-y|^{n-2s+2}}\int_0^{\rho(x,y)}\frac{t^{s-1}}{(1+t)^\frac{n}{2}}\ dt+\frac{\nabla_x\rho(x,y)}{|x-y|^{n-2s}}\frac{(\rho(x,y))^{s-1}}{(1+\rho(x,y))^{\frac{n}{2}}},
\end{equation}
where
\begin{equation}
\begin{aligned}
\nabla_x\rho(x,y)&=-\frac{2x(1-|y|^2)}{|x-y|^2}-\frac{2(x-y)(1-|x|^2)(1-|y|^2)}{|x-y|^4}\\
&=-\frac{2x}{(1-|x|^2)}\rho(x,y)-\frac{2(x-y)}{|x-y|^2}\rho(x,y).
\end{aligned}
\end{equation}
Hence
\begin{equation}
\begin{aligned}
\frac{\nabla_x G(x,y)}{\kappa(n,s)}&=\frac{(y-x)}{|x-y|^{n-2s+2}}\underbrace{\left[\int_0^{\rho(x,y)}\frac{(n-2s)t^{s-1}}{(1+t)^\frac{n}{2}}\ dt+\frac{2(\rho(x,y))^s}{(1+\rho(x,y))^{\frac{n}{2}}}\right]}_{{\rm denote\ as}\ I_1(x,y)}\\
&\ \ \ \ \ \ \ \ \ \ \ \ -\frac{2x}{(1-|x|^2)|x-y|^{n-2s}}\underbrace{\frac{(\rho(x,y))^{s}}{(1+\rho(x,y))^{\frac{n}{2}}}}_{{\rm denote\ as}\ I_2(x,y)}.
\end{aligned}\end{equation}

Hence,  we have
\begin{equation}\label{2.1}
\frac{|\nabla_x G(x,y)|(1-|x|)^r}{\kappa(n,s)}\leqslant \frac{(1-|x|)^r}{|x-y|^{n-2s+1}}I_1(x,y)+\frac{4I_2(x,y)}{(1-|x|)^{1-r}|x-y|^{n-2s}},
\end{equation}
where for both $i=1,2$, we know
\begin{equation}
I_i\leqslant \frac{C_i}{4}\min\{(\rho(x,y))^s,1\}\leqslant C_i\min\bigg\{\bigg(\frac{(1-|x|)(1-|y|)}{|x-y|^2}\bigg)^s,1\bigg\}.
\end{equation}

Apply Lemma~\ref{lem0} for $I_1$, we have:
\begin{equation}
I_1(x,y)\leqslant 4C_1 \left(\frac{1-|y|}{1-|x|}\right)^r.
\end{equation}
For $I_2$, we first observe:
\begin{equation}
I_2(x,y)\leqslant C_2\bigg(\frac{(1-|x|)(1-|y|)}{|x-y|^2}\bigg)^{\frac{1}{2}}\min\bigg\{ \bigg(\frac{(1-|x|)(1-|y|)}{|x-y|^2}\bigg)^{s-\frac{1}{2}},1\bigg\},
\end{equation}
And since $|r-\frac{1}{2}|\leqslant s-\frac{1}{2}$, apply Lemma~\ref{lem0}
\begin{equation}
I_2(x,y)\leqslant 4C_2\bigg(\frac{(1-|x|)(1-|y|)}{|x-y|^2}\bigg)^{\frac{1}{2}}\bigg(\frac{1-|y|}{1-|x|}\bigg)^{r-\frac{1}{2}}= 4C_2\frac{(1-|x|)^{1-r}(1-|y|)^r}{|x-y|}.
\end{equation}

Combining this two observations with \eqref{2.1}, we get
\begin{equation}
|\nabla_x G(x,y)|(1-|x|)^{r}\leqslant C_3\frac{(1-|y|)^r}{|x-y|^{n-2s+1}}.
\end{equation}
Then for given $x\in B_1$, we have
\begin{equation}
\begin{aligned}
|\nabla u(x)|(1-|x|)^{r}&=\left|(1-|x|)^r\int_{B_1}\nabla_xG(x,y)f(y)\ dy\right|\\
&\leqslant \int_{B_1}|\nabla_xG(x,y)|(1-|x|)^r|f(y)|\ dy\\
&\leqslant C_3\int_{B_1}\frac{(1-|y|)^r|f(y)|}{|x-y|^{n-2s+1}}\ dy.
\end{aligned}
\end{equation}
The Hardy-Littlewood-Sobolev inequality gives directly the desired result.
\end{proof}

\begin{remark}
Theorem~\ref{est1} and~\ref{est2} indicates that for $\frac{1}{2}<s<1$, $1-s\leqslant r\leqslant s$ and $1\leqslant p\leqslant\infty$, the following embedding holds:
\begin{equation}
\mathscr{W}^{2s,p}_{r,0}(B_1)\hookrightarrow\left\{
\begin{aligned}
&W^{1,q}_r(B_1)&p=1,\quad1\leqslant q<\frac{n}{n-2s+1},\\
&W^{1,\frac{np}{n-(2s-1)p}}_r(B_1)&1<p<\frac{n}{2s-1},\\
&W^{1,\infty}_r(B_1)&p>\frac{n}{2s-1}.
\end{aligned}\right.
\end{equation}
\end{remark}

The above embedding results lead to a  continuity result:
\begin{theorem}\label{C1+}
Let $\frac{1}{2}<s<1$, $1-s\leqslant r\leqslant s$, $p>\frac{n}{2s-1}$ and $u\in \mathscr{W}^{2s,p}_{r,0}(B_1)$.
 Then $u\in C^{1,\gamma}_{\rm loc}(B_1)$ for $0<\gamma\leqslant2s-1-\frac{n}{p}$.
\end{theorem}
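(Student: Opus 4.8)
The plan is as follows. Since $s>\tfrac12$ forces $0<1-s\le r\le s$, Theorem~\ref{cor1} (cf. the discussion at the beginning of this section) identifies $\mathscr{W}^{2s,p}_{r,0}(B_1)$ with $\{G\ast f:\ f\in L^p_r(B_1)\}$, so it suffices to prove that $G\ast f\in C^{1,\gamma}_{\rm loc}(B_1)$ for every $f\in L^p_r(B_1)$ and every $0<\gamma\le\gamma_0:=2s-1-\tfrac np$, noting that $\gamma_0>0$ under our hypothesis on $p$. We may assume $n\ge2$ (hence $n>2s$); for $n=1$, the only case in which $2s$ may exceed $n$, the kernel $G$ is bounded near the diagonal and the argument below only simplifies. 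Fix $B_{1-2\eta}\Subset B_1$ and write $f=f_1+f_2$ with $f_1=f\chi_{B_{1-\eta}}$. On $B_{1-\eta}$ the weight $(1-|\cdot|)^r$ is bounded above and below, so $\|f_1\|_{L^p(B_{1-\eta})}\le C(\eta)\|f\|_{L^p_r(B_1)}$; for $f_2$, supported where $1-|y|\le\eta$, the only fact we use is $\int(1-|y|)^{s}|f_2(y)|\,dy\le C\|f\|_{L^p_r(B_1)}$, which follows from H\"older's inequality since $s\ge r$.

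For $x\in B_{1-2\eta}$ and $y\in\operatorname{supp} f_2$ one has $|x-y|\ge\eta$, and inspecting the explicit formula for $G$ shows $|\partial_x^\beta G(x,y)|\le C(\beta,\eta)(1-|y|)^s$ there, the point being that each $x$-derivative of $\rho(x,y)$ is bounded by $C(\eta)\rho(x,y)$ and hence does not spoil the $(1-|y|)^s$ decay coming from $\int_0^{\rho}t^{s-1}(1+t)^{-n/2}\,dt\le C\rho^s$. Differentiating under the integral sign then gives $G\ast f_2\in C^\infty(B_{1-2\eta})$, with all derivatives controlled by $\|f\|_{L^p_r(B_1)}$; this term is harmless.

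The heart of the matter is $u_1:=G\ast f_1$, for which I would first establish the local kernel bounds
\begin{equation}\label{eq:kb}
|\nabla_x G(x,y)|\le\frac{C(\eta)}{|x-y|^{\,n-2s+1}},\qquad |\nabla_x^2 G(x,y)|\le\frac{C(\eta)}{|x-y|^{\,n-2s+2}}\qquad\text{for }x,y\in B_{1-\eta/2}.
\end{equation}
The first bound is already essentially contained in the proof of Theorem~\ref{est2}. For the second I would split off the Riesz singularity: setting $c_{n,s}:=\kappa(n,s)\int_0^\infty t^{s-1}(1+t)^{-n/2}\,dt<\infty$, one has $G(x,y)=c_{n,s}|x-y|^{-(n-2s)}+R(x,y)$, and expanding the tail integral $\int_{\rho}^\infty t^{s-1}(1+t)^{-n/2}\,dt$ for large $\rho$ gives the closed expression $R(x,y)=-\kappa(n,s)\,[(1-|x|^2)(1-|y|^2)]^{\,s-n/2}\,\Phi\big(\tfrac1{\rho(x,y)}\big)$ with $\Phi$ analytic near the origin. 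Since $\tfrac1{\rho(x,y)}=|x-y|^2/[(1-|x|^2)(1-|y|^2)]$ is smooth in $x$ on $B_{1-\eta/2}$ and small when $|x-y|$ is small, $R$ is smooth in $x$ with locally bounded $x$-derivatives, and \eqref{eq:kb} reduces to the elementary estimates for $\nabla_x|x-y|^{-(n-2s)}$ and $\nabla_x^2|x-y|^{-(n-2s)}$ (the region $|x-y|\ge\eta/4$ being trivial since $G$ is manifestly smooth off the diagonal).

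Granting \eqref{eq:kb}, the remainder is the standard near/far decomposition. Differentiation under the integral is legitimate because $n-2s+1<n$ (as $s>\tfrac12$), so for $x_1,x_2\in B_{1-3\eta}$ with $d:=|x_1-x_2|$ small we have
\[
\nabla u_1(x_1)-\nabla u_1(x_2)=\int_{B_{1-\eta}}\big[\nabla_x G(x_1,y)-\nabla_x G(x_2,y)\big]f_1(y)\,dy,
\]
and we split the integration domain into $B_{2d}(x_1)$ and its complement. On $B_{2d}(x_1)$ we bound the two gradients separately using the first estimate in \eqref{eq:kb} and apply H\"older's inequality; the factor $\big(\int_{|z|\le 3d}|z|^{-(n-2s+1)p'}\,dz\big)^{1/p'}$ is finite and equals $Cd^{\gamma_0}$ precisely because the hypothesis $p>\tfrac n{2s-1}$ is equivalent to $(n-2s+1)p'<n$. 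On the complement, the mean value inequality together with $|\xi-y|\ge\tfrac12|x_1-y|$ for $\xi\in[x_1,x_2]$ and the second estimate in \eqref{eq:kb} bound the integrand by $C(\eta)\,d\,|x_1-y|^{-(n-2s+2)}|f_1(y)|$; since $(n-2s+2)p'>n$ holds unconditionally (it rearranges to $p(2-2s)>-n$), H\"older's inequality again gives $C(\eta)\,d\cdot d^{\,n/p'-(n-2s+2)}=C(\eta)\,d^{\gamma_0}$. Thus $|\nabla u_1(x_1)-\nabla u_1(x_2)|\le C(\eta)\|f\|_{L^p_r(B_1)}|x_1-x_2|^{\gamma_0}$, i.e. $\nabla u_1\in C^{0,\gamma_0}(\overline{B_{1-3\eta}})$; combined with $G\ast f_2\in C^\infty(B_{1-2\eta})$ and the arbitrariness of $\eta$, this yields $u\in C^{1,\gamma}_{\rm loc}(B_1)$ for every $0<\gamma\le\gamma_0$. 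The main obstacle is the far-range step, that is, the second-order kernel bound in \eqref{eq:kb}; its proof rests on isolating the Riesz part of $G$ and checking that the remainder $R$ is locally $C^2$ in $x$. Once this is in hand, the near-range estimate and the exponent bookkeeping are routine.
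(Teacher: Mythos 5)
Your proof is correct, but it takes a genuinely different route from the paper's. The paper never touches the Green kernel in this argument: it truncates $f_\epsilon=f\chi_{B_{1-\epsilon}}\in L^p(\mathbb{R}^n)$, forms the Riesz potential $v_\epsilon=c(n,s)\,|\cdot|^{2s-n}\ast f_\epsilon$, quotes the standard mapping property $v_\epsilon\in C^{1,\gamma}(\mathbb{R}^n)$ for $p>\frac{n}{2s-1}$ together with $(-\Delta)^sv_\epsilon=f$ in $\mathcal{D}'(B_{1-\epsilon})$, and then invokes interior smoothness of $s$-harmonic functions (cited from \cite{li2020non}) to get $u-v_\epsilon\in C^\infty(B_{1-2\epsilon})$. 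You instead represent $u=G\ast f$, split $f$ into an interior piece and a boundary-layer piece, and run the near/far H\"older estimate directly on $\nabla_xG$; this forces you to prove the local second-order kernel bound $|\nabla_x^2G(x,y)|\leqslant C(\eta)|x-y|^{-(n-2s+2)}$, which is not in the paper and is exactly where your extra work lies. Your strategy for it --- writing $G=c_{n,s}|x-y|^{2s-n}+R$ and showing $R$ is locally smooth in $x$ via the large-$\rho$ expansion of the tail integral --- is sound, and your exponent bookkeeping in both the near range ($(n-2s+1)p'<n\iff p>\frac{n}{2s-1}$) and the far range ($(n-2s+2)p'>n$ always) is right. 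What each approach buys: the paper's proof is shorter and uniform in $n$, at the price of importing the hypoellipticity result from \cite{li2020non} and the Riesz-potential regularity as black boxes; yours is essentially self-contained and produces the explicit local estimate $\|\nabla u\|_{C^{0,\gamma_0}(B_{1-3\eta})}\leqslant C(\eta)\|f\|_{L^p_r(B_1)}$ with $\gamma_0=2s-1-\frac{n}{p}$. The one loose end is $n=1$ (the only case with $n<2s$): there $\int_0^\infty t^{s-1}(1+t)^{-n/2}\,dt=\infty$, so your constant $c_{n,s}$ and the decomposition $G=c_{n,s}|x-y|^{2s-n}+R$ do not exist as written, and the leading singular term must instead be extracted from a large-$\rho$ expansion of $\int_0^\rho$ (it is then $|x-y|^{2s-1}$ plus a smooth function). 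You flag this case but dismiss it a little too quickly as a pure simplification; it needs its own two lines.
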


\begin{proof}
Fix an $\epsilon\in(0,1)$, define
\begin{equation}
f_\epsilon(x)=f(x)\chi_{B_{1-\epsilon}}(x).
\end{equation}
Since $\delta^{r}f\in L^p(B_1)$, we derive $f\in L^p(B_{1-\epsilon})$ and hence $f_\epsilon\in L^p(\mathbb{R}^n)$. Now, define
\begin{equation}
v_\epsilon(x)=c(n,s)\int_{\mathbb{R}^n}\frac{f_\epsilon(y)}{|x-y|^{n-2s}}\ dy.
\end{equation}
We know $v_\epsilon\in C^{1,\gamma}(\mathbb{R}^n)$ and $(-\Delta)^sv_\epsilon=f_\epsilon=f$ in $\mathcal{D}'(B_{1-\epsilon})$.

Hence, $(-\Delta)^s(u-v_\epsilon)=0$ in $\mathcal{D}'(B_{1-\epsilon})$. This indicates $u-v_\epsilon\in C^\infty(B_{1-2\epsilon})$\cite{li2020non}, and consequently
\begin{equation}
u\in C^{1,\gamma}(B_{1-2\epsilon}).
\end{equation}
\end{proof}
\begin{theorem}\label{C0+}
Let $0<r<1$  and $u\in W^{1,\infty}_r(B_1)$.
 Then $u\in C^{1-r}(\overline{B_1})$.
\end{theorem}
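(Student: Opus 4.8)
The plan is to establish the quantitative Hölder bound $|u(x)-u(y)|\le C\,|x-y|^{1-r}$ for all $x,y\in B_1$, with $C$ controlled by $n$, $r$ and $\|u\|_{W^{1,\infty}_r(B_1)}$; since $B_1$ is convex, any function obeying such a bound is uniformly continuous on $B_1$ and extends to a function on $\overline{B_1}$ satisfying the same bound, which is precisely the assertion $u\in C^{1-r}(\overline{B_1})$. Throughout, recall $\delta(z)=1-|z|$ for $z\in B_1$, and set $M:=\|\,|\nabla u|\,\|_{L^\infty_r(B_1)}$, so that $|\nabla u(z)|\le M\,\delta(z)^{-r}$ for a.e.\ $z\in B_1$. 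Since $\delta^{-r}$ is bounded on every compact subset of $B_1$, we have $\nabla u\in L^\infty_{\mathrm{loc}}(B_1)$, hence $u$ admits a representative that is locally Lipschitz, in particular continuous, in $B_1$; we work with this representative.

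First I would reduce to a smooth setting by mollification. For small $\eta>0$, $u_\eta:=J_\eta u$ is smooth on $B_{1-\eta}$, and since $\delta(\zeta)\ge\frac12\delta(z)$ whenever $\zeta\in B_\eta(z)$ and $z\in B_{1-2\eta}$, one gets $|\nabla u_\eta(z)|\le 2^r M\,\delta(z)^{-r}$ throughout $B_{1-2\eta}$. It is then enough to prove the Hölder bound for $u_\eta$ on $B_{1-2\eta}$ with a constant independent of $\eta$ and let $\eta\to0$, using that $u_\eta\to u$ locally uniformly by continuity of $u$. So I may assume $u$ is smooth and $|\nabla u(z)|\le M\,\delta(z)^{-r}$ pointwise; the restriction to $B_{1-2\eta}$ will turn out harmless because every segment used below lies at depth bounded below by a fixed multiple of $|x-y|$.

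Second comes the core chaining argument, organized by the size of $d:=|x-y|$. Integrating $|\nabla u|$ along the radial segment from $x$ to the origin and using $r<1$ gives $|u(x)-u(0)|\le\frac{M}{1-r}$, so $u\in L^\infty(B_1)$ and the case $d\ge\frac14$ follows from $|u(x)-u(y)|\le 2\|u\|_{L^\infty(B_1)}\le C\,d^{1-r}$. Assume now $d<\frac14$. If $d\le\frac12\max(\delta(x),\delta(y))$, say $\delta(x)\ge\delta(y)$, then every $z\in[x,y]$ satisfies $\delta(z)\ge\delta(x)-d\ge\frac12\delta(x)\ge d$, so $|\nabla u|\le M\,d^{-r}$ along $[x,y]$ and $|u(x)-u(y)|\le M\,d^{1-r}$. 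Otherwise $\delta(x),\delta(y)<2d$; push both points radially inward to depth $2d$ (which is $<\frac12$), i.e.\ set $x'=(1-2d)\tfrac{x}{|x|}$ and $y'=(1-2d)\tfrac{y}{|y|}$, legitimate since then $|x|,|y|>\frac12$. Then $|x-x'|\le 2d$ and $|y-y'|\le 2d$, so $|x'-y'|\le 5d$; along the radial segments $[x,x']$ and $[y,y']$ the depth stays in $[\delta(x),2d]$ and $[\delta(y),2d]$, giving $|u(x)-u(x')|,\,|u(y)-u(y')|\le\frac{M}{1-r}(2d)^{1-r}$; and for $z\in[x',y']$, convexity of $|\cdot|$ gives $|z|\le\max(|x'|,|y'|)=1-2d$, hence $\delta(z)\ge 2d$ and $|u(x')-u(y')|\le M(2d)^{-r}\cdot 5d$. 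Summing the three pieces yields $|u(x)-u(y)|\le C\,d^{1-r}$.

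The point that needs care is that, in the mollified problem, this last case must keep every segment of the chain inside $B_{1-2\eta}$ with depth bounds uniform in $\eta$; this is automatic, since in that case $\delta(x),\delta(y)<2d$, so $x,y\in B_{1-2\eta}$ forces $\eta<d$, and then $x',y'$ lie on the sphere of radius $1-2d<1-2\eta$ while the radial and chord segments clearly never exit $B_{1-2\eta}$. With the uniform estimate $[u_\eta]_{C^{0,1-r}(B_{1-2\eta})}\le C$ in hand, I would let $\eta\to0$ to obtain the Hölder bound for $u$ on $B_1$, and then extend $u$ by uniform continuity to $\partial B_1$; this gives $u\in C^{1-r}(\overline{B_1})$ together with the estimate $\|u\|_{C^{1-r}(\overline{B_1})}\le C\|u\|_{W^{1,\infty}_r(B_1)}$. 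The genuine (if modest) obstacle is thus the geometry of the chaining construction in the boundary layer; everything else is routine, and the hypothesis $r<1$ enters exactly in the integrability of $\delta^{-r}$ along the radial segments.
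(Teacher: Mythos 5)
Your proof is correct and follows essentially the same route as the paper's: a case analysis on $|x-y|$ versus the distances to the boundary, pushing boundary-near points radially inward to a common depth comparable to $|x-y|$ and integrating the bound $|\nabla u|\leqslant M\delta^{-r}$ along the radial segments and the interior chord. The only differences are cosmetic (a mollification preamble and slightly different case thresholds), so no further comparison is needed.
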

\begin{proof}
It suffices to show $|u(x)-u(y)|\leqslant C|x-y|^{1-r}$ for all $x,y\in B_1$.

Indeed, from $u\in W^{1,\infty}_r(B_1)$, we know:
\begin{equation}
|\nabla u(x)|\leqslant M (1-|x|)^{-r}\quad\text{a.e.}\quad B_1
\end{equation}

Without loss of generality, we assume $|x|\geqslant|y|$.
Consider the following two cases:
\begin{enumerate}
\item If $|x-y|\leqslant1-|x|\leqslant 1-|y|$, then
\begin{equation}
|u(x)-u(y)|\leqslant M|x-y|^{1-r}.
\end{equation}
\item If $1-|x|\leqslant|x-y|<1-|y|$,  define
\begin{gather*}
x'=\frac{|y|}{|x|}x.
\end{gather*}
Then, $|x'|=|y|>1-|x-y|$, $|x'-y|\leqslant|x-y|$ and $|x-x'|\leqslant|x-y|$.
Consequently:
\begin{equation}
|u(x')-u(y)|\leqslant M|x-y|^{-r}|x'-y|\leqslant M|x-y|^{1-r}
\end{equation}
\begin{equation}
\begin{aligned}
|u(x)-u(x')|\leqslant M\int_{1-|x|}^{1-|y|}t^{-r}\ dt&\leqslant\frac{M}{1-r}[(1-|y|)^{1-r}-(1-|x|)^{1-r}]\\
&\leqslant\frac{M}{1-r}(|x|-|y|)^{1-r}\leqslant\frac{M}{1-r}|x-y|^{1-r}
\end{aligned}
\end{equation}
and $|u(x)-u(y)|\leqslant M|x-y|^{1-r}+\frac{M}{1-r}|x-y|^{1-r}\leqslant C|x-y|^{1-r}$.
\item If $1-|x|<1-|y|\leqslant |x-y|$,  define
\begin{gather*}
x'=\frac{1-|x-y|}{|x|}x,\qquad y'=\frac{1-|x-y|}{|y|}y.
\end{gather*}
Then $1-|x'|=1-|y'|=|x-y|$ and $|x'-y'|\leqslant|x-y|$. Consequently
\begin{equation}
|u(x')-u(y')|\leqslant M|x-y|^{-r}|x'-y'|\leqslant M|x-y|^{1-r}.
\end{equation}
\begin{equation}
|u(x)-u(x')|\leqslant M\int_{1-|x|}^{|x-y|}t^{-r}\ dt\leqslant\frac{M}{1-r}[(|x-y|)^{1-r}-(1-|x|)^{1-r}]\leqslant\frac{M}{1-r}|x-y|^{1-r}.
\end{equation}
\begin{equation}
\text{Similarly}\quad
|u(y)-u(y')|\leqslant \frac{M}{1-r}|x-y|^{1-r}.
\end{equation}
and $|u(x)-u(y)|\leqslant M|x-y|^{1-r}+\frac{M}{1-r}|x-y|^{1-r}+\frac{M}{1-r}|x-y|^{1-r}\leqslant C|x-y|^{1-r}$.
\end{enumerate}
\end{proof}

The following compact embedding is also true:
\begin{theorem}\label{cpt}
Let $\frac{1}{2}<s<1$, $1-s\leqslant r\leqslant s$, $1\leqslant p<\infty$. Then
$\mathscr{W}^{2s,p}_{r,0}(B_1)\subset\subset W^{1,p}_r(B_1)$
\end{theorem}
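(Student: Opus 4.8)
The plan is to recognise the asserted embedding as the compactness of the Green operator $\mathcal{G}\colon f\mapsto G\ast f$ from $L^p_r(B_1)$ to $W^{1,p}_r(B_1)$. Indeed, by Theorems~\ref{thm:un} and~\ref{cor2} one has $\mathscr{W}^{2s,p}_{r,0}(B_1)=\{G\ast f:f\in L^p_r(B_1)\}$, the map $f\mapsto G\ast f$ being an isomorphism onto it with $\|G\ast f\|_{\mathscr{W}^{2s,p}_{r,0}(B_1)}=\|f\|_{L^p_r(B_1)}$, while Theorems~\ref{est1} and~\ref{est2} show that $\mathcal{G}$ maps $L^p_r(B_1)$ boundedly into $W^{1,p}_r(B_1)$. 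So it suffices to take $u_k=G\ast f_k$ with $\|f_k\|_{L^p_r(B_1)}\leqslant M$ and extract a subsequence that is Cauchy in $W^{1,p}_r(B_1)$.

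\emph{Step 1: uniform smallness on a boundary collar.} Theorems~\ref{est1} and~\ref{est2} in fact provide an exponent $q>p$ (taken just below the relevant Sobolev exponent, or $q=\infty$ in the supercritical ranges; here it is crucial that $2s>0$ and $2s-1>0$, so these gains are strict) such that $\|\delta^r u\|_{L^q(B_1)}+\|\delta^r\nabla u\|_{L^q(B_1)}\leqslant C\|f\|_{L^p_r(B_1)}$ whenever $u=G\ast f$. Since the collar $A_\eta:=B_1\setminus B_{1-\eta}$ satisfies $|A_\eta|\leqslant C\eta$, Hölder's inequality yields, with $\theta:=\tfrac1p-\tfrac1q>0$,
\[
\|u_k\|_{W^{1,p}_r(A_\eta)}\leqslant C\,|A_\eta|^{\theta}\,\|f_k\|_{L^p_r(B_1)}\leqslant CM\,\eta^{\theta}
\]
uniformly in $k$ (with the obvious reading if $q=\infty$).

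\emph{Step 2: compactness in the interior.} Fix $\eta\in(0,\tfrac12)$. On $B_{1-\eta}$ the weight $\delta^r$ is bounded above and below, so $W^{1,p}_r(B_{1-\eta})$ and $W^{1,p}(B_{1-\eta})$ have equivalent norms, and writing $\tilde f=\delta^r f$ — an isometry of $L^p_r(B_1)$ onto $L^p(B_1)$ — the map $f\mapsto(G\ast f)|_{B_{1-\eta}}$ becomes the integral operator with kernel $\tilde K(x,y)=\delta(y)^{-r}G(x,y)$, $x\in B_{1-\eta}$, $y\in B_1$. The estimates derived in the proofs of Theorems~\ref{est1} and~\ref{est2} give, for $x\in B_{1-\eta}$,
\[
|\tilde K(x,y)|\leqslant\frac{C(\eta)}{|x-y|^{\,n-2s}},\qquad |\nabla_x\tilde K(x,y)|\leqslant\frac{C(\eta)}{|x-y|^{\,n-2s+1}},
\]
so both kernels are weakly singular (orders $2s>0$, resp.\ $2s-1>0$); moreover, being cut off from $\partial B_1$ in the $x$-variable, $\tilde K$ and $\nabla_x\tilde K$ extend continuously to $\overline{B_{1-\eta}}\times\overline{B_1}$ away from the diagonal, since $G$ and its $x$-derivatives vanish like $(1-|y|)^s$ as $y\to\partial B_1$, which dominates $\delta(y)^{-r}$ because $r\leqslant s$. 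I would then replace $\tilde K$ by $\tilde K^{(m)}(x,y)=\tilde K(x,y)\,\zeta_{1/m}(|x-y|)$, with $\zeta_{1/m}$ a smooth radial cut-off vanishing on $\{|x-y|<\tfrac1{2m}\}$ and equal to $1$ on $\{|x-y|\geqslant\tfrac1m\}$: the truncated kernels are bounded and continuous on $\overline{B_{1-\eta}}\times\overline{B_1}$ together with their $x$-derivatives of order $\leqslant2$, so the associated operators map $L^p(B_1)$ boundedly into $C^2(\overline{B_{1-\eta}})$ and hence, by Arzelà--Ascoli, compactly into $W^{1,p}(B_{1-\eta})$; meanwhile a Schur-test estimate on the shell $\{|x-y|<\tfrac1m\}$ shows that $\tilde K-\tilde K^{(m)}$ defines an operator $L^p(B_1)\to W^{1,p}(B_{1-\eta})$ of norm $O\!\left(m^{-(2s-1)}\right)$, and this is where $s>\tfrac12$ is used. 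Thus $f\mapsto(G\ast f)|_{B_{1-\eta}}$ is an operator-norm limit of compact operators, hence compact from $L^p_r(B_1)$ into $W^{1,p}_r(B_{1-\eta})$; in particular $\{u_k|_{B_{1-\eta}}\}$ is precompact in $W^{1,p}_r(B_{1-\eta})$.

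\emph{Step 3: conclusion, and the main difficulty.} With $\eta_j=2^{-j}$, a diagonal argument over $j$ produces a subsequence, still denoted $\{u_k\}$, converging in $W^{1,p}_r(B_{1-\eta_j})$ for every $j$. Given $\varepsilon>0$, choose $j$ with $2CM\eta_j^\theta<\varepsilon/2$; then, using $B_1=B_{1-\eta_j}\cup A_{\eta_j}$, for $k,l$ large,
\[
\|u_k-u_l\|_{W^{1,p}_r(B_1)}\leqslant\|u_k-u_l\|_{W^{1,p}_r(B_{1-\eta_j})}+\|u_k\|_{W^{1,p}_r(A_{\eta_j})}+\|u_l\|_{W^{1,p}_r(A_{\eta_j})}<\varepsilon,
\]
so $\{u_k\}$ is Cauchy in the Banach space $W^{1,p}_r(B_1)$, which is the claim. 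I expect the main obstacle to be Step~2: one must exhibit the weakly singular Green kernel as a genuinely compact operator into $W^{1,p}$ uniformly for all $1\leqslant p<\infty$, in particular for $p=1$ where no reflexivity or uniform convexity is available. The structural facts that make this work are that the weight exponent $r\in[1-s,s]$ is exactly matched to the kernel, so that $\delta(x)^r\nabla_xG(x,y)\delta(y)^{-r}$ carries the pure Riesz singularity $|x-y|^{-(n-2s+1)}$, and that $2s-1>0$ renders this singularity locally integrable, which is precisely what lets the smooth-truncation approximation converge in operator norm.
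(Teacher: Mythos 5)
Your proof is essentially correct, and its engine is the same one the paper uses: after the weight $\delta^r$ is transferred onto the kernel, $\delta(x)^r\nabla_xG(x,y)\delta(y)^{-r}$ is dominated by the locally integrable Riesz kernel $|x-y|^{-(n-2s+1)}$ (this is where $1-s\leqslant r\leqslant s$ and $s>\tfrac12$ enter, via Lemma~\ref{lem0}), one truncates, and one exhibits the operator as an operator-norm limit of compact operators, with the error controlled by the $L^1$-norm of the tail of the Riesz kernel (a Schur/Young estimate). The difference is organizational: the paper truncates the kernel by \emph{value} and works on all of $B_1\times B_1$ at once, precisely because the estimate $\left|\left(\tfrac{1-|x|}{1-|y|}\right)^r\partial_{x_j}G(x,y)\right|\leqslant C|x-y|^{-(n-2s+1)}$ from Theorem~\ref{est2} is uniform up to the boundary; this makes your Step~1 (collar smallness via the higher-integrability gain of Theorems~\ref{est1} and~\ref{est2}) and Step~3 (diagonal argument over shrinking collars) unnecessary. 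Your interior/collar decomposition is sound but buys nothing here, and it introduces the one genuine (though easily patched) gap in your write-up: in Step~1 you invoke a strict gain $q>p$ from Theorems~\ref{est1}--\ref{est2}, which as stated exclude the critical exponents $p=\tfrac{n}{2s}$ and $p=\tfrac{n}{2s-1}$; you would need to first lower $p$ slightly (using boundedness of $B_1$) to recover a $q>p$ in those cases. If you instead run your truncation argument globally with the weighted kernel, both the exponent bookkeeping and the diagonal extraction disappear, which is exactly the paper's proof.
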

\begin{proof}
Observe that $u\in \mathscr{W}^{2s,p}_{r,0}(B_1)$ indicates that
\begin{equation}
u(x)=G\ast g=\int_{B_1}G(x,y)g(y)\ dy\chi_{B_1}(x)\quad\text{for\ some}\quad g\in  L^p_r(B_1)
\end{equation}
Hence, it suffices to prove that each of the following integral operators $\{\Psi_i:L^p(B_1)\rightarrow L^p(B_1)\}_{i=0}^n$ are compact. Here $\{\Psi_i\}$ are defined as:
\begin{equation}
\begin{aligned}
\Psi_0[f](x)&=\int_{B_1}\left(\frac{1-|x|}{1-|y|}\right)^rG(x,y)f(y)\ dy,\\
\Psi_j[f](x)&=\int_{B_1}\left(\frac{1-|x|}{1-|y|}\right)^r\partial_{x_j}G(x,y)f(y)\ dy,\quad j=1,2,\cdots,n.
\end{aligned}
\end{equation}
Take $\Psi_1$ for example.
Write $K(x,y)=\left(\frac{1-|x|}{1-|y|}\right)^r\partial_{x_j}G(x,y)$.

It's clear that $K:B_1\times B_1\rightarrow \mathbb{R}$ is continuous whenever $x\neq y$.
As we have derived in Theorem~\ref{est2}, we also have: $|K(x,y)|\leqslant C\frac{1}{|x-y|^{n-2s+1}}$.
Define:
\begin{equation}
K_N(x,y)=\left\{
\begin{aligned}
&K(x,y)&\text{if}\quad K(x,y)\leqslant N,\\
&\text{continuously\ extended,\ and}\leqslant N&\text{if}\quad K(x,y)>N.
\end{aligned}
\right.
\end{equation}
Then clearly, $K_N\in C(B_1\times B_1)\cap L^\infty(B_1\times B_1)$.

Then, it is a classical result that: for each $N$, the integral operator $\Omega_N: L^p(B_1)\rightarrow L^p(B_1)$ is a compact operator, where
\begin{equation}
\Omega_N[f](x):=\int_{B_1} K_N(x,y)f(y)\ dy.
\end{equation}

Another important observation is that
\begin{equation}
\begin{aligned}
\|\Omega_N[f]-\Psi_1[f]\|_{L^p(B_1)}
&\leqslant C\left\|\int_{B_1}|K_N(x,y)-K(x,y)||f(y)|\ dy\right\|_{L^p(B_1)}\\
&\leqslant C\left\|\int_{B_1}\left|\frac{1}{|x-y|^{n-2s}}\chi_{|x-y|^{2s-n}>N}\right||f(y)|\ dy\right\|_{L^p(B_1)}\\
&\leqslant C\||x|^{2s-n}\chi_{|x|^{2s-n}>N}\|_{L^1(B_2)}\|f\|_{L^p(B_1)}
\end{aligned}
\end{equation}
Since $\||x|^{2s-n}\chi_{|x|^{2s-n}>N}\|_{L^1(B_2)}\rightarrow 0$ as $N\rightarrow\infty$, we know $\Omega_N[f]$ converges uniformly (does not depend on $f$) to $\Psi_1[f]$ in $L^p(B_1)$ whenever $\|f\|_{L^p(B_1)}$ is uniformly bounded.
Hence, a direct diagonal method gives the compactness of $\Psi_1$.
\end{proof}

\section{Dirichlet problem for equations with general lower order terms}
We are now ready to prove Theorem \ref{exi}

\subsection{A maximum principle for $C^1$-solution}
\begin{theorem}\label{maxp}
Let $\frac{1}{2}<s<1$, $\vec b, c\in L^\infty(B_1)$ and $c\geqslant 0$ in $B_1$. Suppose  $u\in C(\mathbb{R}^n)\cap C^1(B_1)\cap\mathcal{L}_{2s}$ satisfies
\begin{equation}\label{pb7}
\left\{
\begin{aligned}
(-\Delta)^su+\vec b\cdot\nabla u+cu\geqslant0\quad&\text{in}\quad\mathcal{D}'(B_1),\\
u\geqslant0\quad&\text{in}\quad \mathbb{R}^n\backslash B_1.
\end{aligned}\right.
\end{equation}
Then $u\geqslant 0$ in $B_1$.
\end{theorem}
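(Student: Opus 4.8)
The plan is to argue by contradiction via mollification. The only genuine obstruction to a direct pointwise argument is that, since $\vec b,c$ are merely $L^\infty$, the quantity $(-\Delta)^su$ is a priori only a distribution on $B_1$, so there is no classical equation to evaluate at a minimum point. Suppose $u<0$ somewhere in $B_1$. Since $u\in C(\mathbb{R}^n)$ and $u\geq 0$ on $\mathbb{R}^n\backslash B_1$, the number $-m:=\min_{\mathbb{R}^n}u$ is attained at some interior point $x_0\in B_1$, with $m>0$, and $u\geq -m$ on all of $\mathbb{R}^n$. Moreover $A:=\{x\in\overline{B_1}:u(x)\leq -m/2\}$ is a compact subset of the \emph{open} ball $B_1$, hence $A\subset B_{1-2\delta}$ for some $\delta\in(0,\tfrac12)$; equivalently $u>-m/2$ on $\{|x|\geq 1-2\delta\}$.

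Next I would regularize: set $u_\epsilon=J_\epsilon u\in C^\infty(\mathbb{R}^n)\cap\mathcal{L}_{2s}$ for small $\epsilon$. Testing the distributional inequality in \eqref{pb7} against the nonnegative test function $j_\epsilon(x-\cdot)\in C_0^\infty(B_1)$ (legitimate once $x\in B_{1-\epsilon}$) and using that $(-\Delta)^s$ commutes with convolution yields the pointwise inequality
\begin{equation*}
(-\Delta)^su_\epsilon(x)\ \geq\ -\,J_\epsilon[\vec b\cdot\nabla u](x)\ -\ J_\epsilon[cu](x)\qquad\text{for }x\in B_{1-\epsilon}.
\end{equation*}
Since $u_\epsilon\geq 0$ outside $B_{1+\epsilon}$ and $u_\epsilon\geq -m$ everywhere, $u_\epsilon$ attains its global infimum over $\mathbb{R}^n$ at some $x_\epsilon\in\overline{B_{1+\epsilon}}$. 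Because $u_\epsilon\geq -m/2$ on $\{|x|\geq 1-\delta\}$ for $\epsilon<\delta$ (the mollification only averages values $>-m/2$ there), while $u_\epsilon(x_0)\to u(x_0)=-m$, the point $x_\epsilon$ must lie in the fixed compact set $\overline{B_{1-\delta}}\subset\subset B_1$, and $-m\leq u_\epsilon(x_\epsilon)\leq u_\epsilon(x_0)\to -m$ forces $u_\epsilon(x_\epsilon)\to -m$. At this interior global minimum of the smooth function $u_\epsilon$ we have $\nabla u_\epsilon(x_\epsilon)=0$.

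Then I would play two estimates at $x_\epsilon$ against each other. For the \emph{upper} bound, write $(-\Delta)^su_\epsilon(x_\epsilon)=C_{n,s}\int_{\mathbb{R}^n}\tfrac{u_\epsilon(x_\epsilon)-u_\epsilon(y)}{|x_\epsilon-y|^{n+2s}}\,dy$ (the integral is absolutely convergent since $2s<2$ and $\nabla u_\epsilon(x_\epsilon)=0$); the integrand is $\leq 0$ everywhere as $x_\epsilon$ is a global minimum, and on $\{|y|\geq 2\}$ one has $u_\epsilon(y)\geq 0$, $u_\epsilon(x_\epsilon)<-m/2$ and $|x_\epsilon-y|\leq\tfrac32|y|$, so this part of the integral is bounded above by a fixed negative constant $-c_1<0$ independent of $\epsilon$; hence $(-\Delta)^su_\epsilon(x_\epsilon)\leq -C_{n,s}c_1<0$. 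For the \emph{lower} bound, $J_\epsilon[\vec b\cdot\nabla u](x_\epsilon)\to 0$ because $\nabla u$ is continuous near $x_\epsilon$ and $\nabla u(x_\epsilon)=\nabla u(x_\epsilon)-\nabla u_\epsilon(x_\epsilon)\to 0$ by uniform convergence $J_\epsilon[\nabla u]\to\nabla u$ on $\overline{B_{1-\delta}}$; and $J_\epsilon[cu](x_\epsilon)=u(x_\epsilon)\,J_\epsilon[c](x_\epsilon)+o(1)\leq o(1)$ since $c\geq 0$ and $u(x_\epsilon)<0$ for small $\epsilon$ (here continuity of $u$ handles the $o(1)$). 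Combining the displayed inequality at $x=x_\epsilon$ gives $-C_{n,s}c_1\geq(-\Delta)^su_\epsilon(x_\epsilon)\geq -o(1)$, and letting $\epsilon\to 0$ yields $0<C_{n,s}c_1\leq 0$, a contradiction; therefore $u\geq 0$ in $B_1$.

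The main obstacle is exactly the low regularity of $\vec b,c$: there is no pointwise equation, so one must mollify and then control (i) that the minimizers $x_\epsilon$ of the mollifications remain in a fixed compact subset of $B_1$ with $u_\epsilon(x_\epsilon)\to -m$, and (ii) a \emph{uniform} strict sign $(-\Delta)^su_\epsilon(x_\epsilon)\leq -C_{n,s}c_1<0$ rather than merely $\leq 0$ — it is this quantitative gap, coming from the nonlocal far-field contribution of a function that is nonnegative outside $B_1$ but genuinely negative inside, that survives against the $o(1)$ errors produced by the lower-order terms after mollification.
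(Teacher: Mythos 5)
Your proof is correct and follows essentially the same route as the paper: mollify, locate the global minimum $x_\epsilon$ of $u_\epsilon$ in a fixed compact subset of $B_1$, extract a uniform strictly negative bound on $(-\Delta)^s u_\epsilon(x_\epsilon)$ from the far-field integral over $\{|y|\geqslant 2\}$, and show the mollified lower-order terms contribute only $o(1)$ against it. The only (cosmetic) difference is in the gradient term: the paper makes $|\nabla u|$ small on a neighborhood of the minimum set of $u$ and shows the minimizer of $u_\epsilon$ lands there, while you use $\nabla u_\epsilon(x_\epsilon)=0$ together with uniform convergence of $\nabla u_\epsilon$ to $\nabla u$ on a compact subset --- the same idea, and your version is, if anything, slightly cleaner.
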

\begin{proof}
We prove by contradiction. Denote $u_\delta(x)=J_\delta u(x)$ be the mollification of $u$.
Define:
\begin{equation}
M_0=\int_{\mathbb{R}^n\backslash B_2}\frac{C_{n,s}}{|y|^{n-2s}}\ dy\leqslant \int_{\mathbb{R}^n\backslash B_2}\frac{C_{n,s}}{|x-y|^{n-2s}}\ dy\quad\text{for}\quad x\in B_1.
\end{equation}

Suppose that
\begin{gather*}
\min_{\overline{B_1}}u=-2m<0\quad\text{and\ let}\quad K=\{x\in \mathbb{R}^n|u(x)=-2m\}\subset B_1.
\end{gather*}
\begin{enumerate}
\item[\textit{Step 1.}]We show that for sufficiently small $\delta$ the minimum point of $u_\delta$ must be near $K$.

Indeed, from the continuity of $u$, $K$ is closed and compact.
Let
\begin{equation}
\rho_0=\frac{1}{2}\mathrm{dist}(K,\partial B_1)>0
\end{equation}
For $0<\epsilon<\rho_0$, define the $\epsilon$-neighborhood of $K$ as:
\begin{gather*}
K_\epsilon=\{x\in\mathbb{R}^n|\mathrm{dist}(x,K)<\epsilon\}\subset B_{1-\rho_0}.
\end{gather*}
Observe that $u$ and $\nabla u$ are uniformly continuous in $B_{1-\rho_0}$ with $u=-2m$ and $\nabla u=0$ in $K$. Then, there exists $\epsilon_0\in (0,\frac{\rho_0}{3})$ such that for any $x\in K_{3\epsilon_0}$:
\begin{equation}
u(x)\leqslant0\quad \text{and}\quad|\nabla u(x)|<\frac{M_0m}{\|\vec b\|_{L^\infty(B_1)}}.
\end{equation}

On the one hand, there exist $\sigma\in(0,m)$ such that
\begin{equation}
H:=\{x\in\mathbb{R}^n|u(x)<-2m+\sigma\}\subset K_{\epsilon_0}.
\end{equation}
On the other hand, $H$ as an open set containing $K$ satisfies
\begin{equation}
H\supset K_{\delta_0}\quad\text{for\ some}\ 0<\delta_0<\epsilon_0.
\end{equation}
Consequently, if $0<\delta<\delta_0$ and $x_0$ is a minimum point of $u_\delta$ in $\overline{B_2}$, then one must have $x_0\in K_{2\epsilon_0}$. Further $u_\delta(x_0)\leqslant -2m+\sigma<m$.
\item[\textit{Step 2.}]We consider $u_\delta$ at its minimum point and derive a contradiction.

Leting $\delta<\delta_0$ and mollifying the inequality \eqref{pb7}, we have
\begin{equation}\label{ieq4}
(-\Delta)^su_\delta(x)+J_\delta(\vec b\cdot\nabla u+cu)(x)\geqslant0\quad\text{for\ all}\quad x\in B_{1-\delta}\supset B_{1-\rho_0}.
\end{equation}
Taking $x=x_0\in K_{2\epsilon_0}\subset B_{1-\rho_0}$ to be the minimum point of $u_\delta$ in $B_2$, we know:
\begin{enumerate}
    \item $u(x)\leqslant0$ for $x\in B_\delta(x_0)\subset B_{3\epsilon_0}$, and hence $J_\delta(cu)(x_0)\leqslant 0$.
    \item $|\vec b(x)||\nabla u(x)|<M_0m$ for $x\in B_\delta(x_0)\subset B_{3\epsilon_0}$, and hence $J_\delta(\vec b\cdot\nabla u)(x_0)<M_0m$.
    \item  $\displaystyle(-\Delta)^s u_\delta(x_0)=C_{n,s}\mathrm{P.V.}\int_{\mathbb{R}^n}\frac{u(x_0)-u(y)}{|x_0-y|^{n-2s}}\ dy< -\int_{\mathbb{R}^n\backslash B_2}\frac{mC_{n,s}}{|x_0-y|^{n-2s}}\ dy\leqslant-M_0m$.
\end{enumerate}
Combining this three facts and \eqref{ieq4}, we derives a contradiction:
\begin{equation}
0\leqslant(-\Delta)^su_\delta(x_0)+J_\delta(\vec b\cdot\nabla u)(x_0)+J_\delta(cu)(x_0)<-M_0m+M_0m+0=0.
\end{equation}
\end{enumerate}
As a consequence of the above contradiction, $u\geqslant 0$ in $B_1$.
\end{proof}
\subsection{The uniqueness}
\begin{theorem}\label{uniq}
Let $\frac{1}{2}<s<1$, $1-s\leqslant r\leqslant s$ and suppose  $u\in \mathscr{W}^{2s,1}_{r,0}(B_1)$ is a solution of \eqref{pb3} for $f=0$, where $\vec b, c\in L^\infty(B_1)$, and $c\geqslant 0$ in $B_1$.
Then $u=0$ in $B_1$.
\end{theorem}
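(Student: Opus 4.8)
The plan is to bootstrap the a priori weak regularity of $u$ up to $u\in C(\mathbb{R}^n)\cap C^1(B_1)\cap\mathcal{L}_{2s}$, and then to conclude from the maximum principle of Theorem~\ref{maxp} applied both to $u$ and to $-u$.

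First, since $u\in\mathscr{W}^{2s,1}_{r,0}(B_1)$, the identification $\mathscr{W}^{2s,1}_{r,0}(B_1)=\{G\ast f:f\in L^1_r(B_1)\}$ (a consequence of Theorems~\ref{thm:un} and~\ref{cor2}) gives $u=G\ast f$ with $f:=(-\Delta)^su\in L^1_r(B_1)$, while the equation forces $f=-\vec b\cdot\nabla u-cu$, so that $|f|\leqslant\|\vec b\|_{L^\infty(B_1)}|\nabla u|+\|c\|_{L^\infty(B_1)}|u|$ a.e.\ in $B_1$. I would then iterate the embeddings of Theorems~\ref{est1} and~\ref{est2}: from $f\in L^{p_k}_r(B_1)$, both $u$ and $\nabla u$ lie in $L^{p_{k+1}}_r(B_1)$, where $p_{k+1}$ is the (smaller) Sobolev exponent coming from Theorem~\ref{est2}. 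Starting from $p_0=1$, one gets $p_1$ any exponent below $\frac{n}{n-2s+1}$, which is $>1$ precisely because $s>\frac12$; and as long as $p_k<\frac{n}{2s-1}$ one has $\frac1{p_{k+1}}=\frac1{p_k}-\frac{2s-1}{n}$, so after finitely many steps $p_k>\frac{n}{2s-1}$ and hence $u,\nabla u\in L^\infty_r(B_1)$, i.e.\ $f\in L^\infty_r(B_1)\subset\bigcap_{1\leqslant p<\infty}L^p_r(B_1)$. (One chooses the exponents so as never to land exactly on a critical value.)

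With $f\in L^p_r(B_1)$ for some $p>\frac{n}{2s-1}$, Theorem~\ref{C1+} gives $u\in C^{1,\gamma}_{\mathrm{loc}}(B_1)$, in particular $u\in C^1(B_1)$, and $u\in W^{1,\infty}_r(B_1)$, so by Theorem~\ref{C0+} also $u\in C^{1-r}(\overline{B_1})$; moreover $u\in\mathcal{L}_{2s}$ by hypothesis. It then remains to identify the boundary trace, namely $u(x)\to0$ as $|x|\to1$. Here I would use $u=G\ast f$ with $f\in L^\infty_r(B_1)$ (hence $f\in L^1_s(B_1)$, since $r\leqslant s$): fix $x_0\in\partial B_1$ and $\eta>0$, and split the Green integral over $B_1\setminus B_\eta(x_0)$ and over $B_1\cap B_\eta(x_0)$. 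On the far part, Lemma~\ref{est3} together with $|x-y|\geqslant\eta/2$ gives $G(x,y)\leqslant C(\eta)(1-|x|)^s(1-|y|)^s$, so that piece is $\leqslant C(\eta)(1-|x|)^s\|f\|_{L^1_s(B_1)}\to0$ as $x\to x_0$; on the near part, $G(x,y)\leqslant C|x-y|^{2s-n}$ and $1-|y|\leqslant|x_0-y|$ bound it by $C\|f\|_{L^\infty_r(B_1)}\int_{B_\eta(x_0)}|x_0-y|^{-r}|x-y|^{2s-n}\,dy\leqslant C\|f\|_{L^\infty_r(B_1)}\,\eta^{2s-r}$ (a standard estimate for products of Riesz kernels, valid since $r+(n-2s)<n$), which is small uniformly in $x$ near $x_0$ because $2s-r\geqslant s>0$. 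Hence $u(x)\to0$ at every boundary point, and, assigning $u=0$ on $\partial B_1$, we obtain $u\in C(\mathbb{R}^n)$.

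Finally, $u\in C(\mathbb{R}^n)\cap C^1(B_1)\cap\mathcal{L}_{2s}$ satisfies $(-\Delta)^su+\vec b\cdot\nabla u+cu=0\geqslant0$ in $\mathcal{D}'(B_1)$ and $u=0\geqslant0$ in $\mathbb{R}^n\setminus B_1$, so Theorem~\ref{maxp} gives $u\geqslant0$ in $B_1$; applying the same theorem to $-u$ gives $u\leqslant0$ in $B_1$, whence $u\equiv0$. I expect the boundary step to be the main obstacle: neither condition~\eqref{cd:un1} nor the interior $C^{1,\gamma}$ regularity by itself forces the trace of $u$ on $\partial B_1$ to vanish, so one genuinely has to exploit the Green representation and the sharp decay of $G(x,\cdot)$ near the boundary (in the spirit of the $A_\epsilon$ estimate in the proof of Theorem~\ref{cor2}); the exponent bookkeeping in the bootstrap is routine but needs care.
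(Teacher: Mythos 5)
Your proof follows essentially the same route as the paper's: bootstrap the integrability of $(-\Delta)^su=-\vec b\cdot\nabla u-cu$ through Theorems~\ref{est1} and~\ref{est2} until $u\in\mathscr{W}^{2s,\infty}_{r,0}(B_1)$, then invoke Theorems~\ref{C1+} and~\ref{C0+} to get $u\in C^{1-r}(\overline{B_1})\cap C^{1,\gamma}_{\rm loc}(B_1)$, and finish with the maximum principle of Theorem~\ref{maxp} applied to $\pm u$. The one place you go beyond the paper is the boundary trace: you correctly observe that Theorem~\ref{maxp} requires $u\in C(\mathbb{R}^n)$, and that neither $u\in C^{1-r}(\overline{B_1})$ nor condition~\eqref{cd:un1} rules out a nonzero trace on $\partial B_1$ (a continuous nonzero trace still makes $\epsilon^{-s}\int_{\{\delta\leqslant\epsilon\}}|u|\sim\epsilon^{1-s}\to0$); the paper passes over this point silently, so your extra step is a genuine improvement rather than a detour. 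However, your near-boundary estimate contains a directional slip: from $1-|y|\leqslant|x_0-y|$ and $r>0$ one gets $(1-|y|)^{-r}\geqslant|x_0-y|^{-r}$, so you may not replace $(1-|y|)^{-r}$ by $|x_0-y|^{-r}$ in an upper bound for $|f(y)|$. The conclusion survives with a small repair: split the near part according to whether $|x-y|\leqslant(1-|y|)/2$, where $(1-|y|)^{-r}\leqslant(2|x-y|)^{-r}$ yields $\int_{B_{C\eta}(x)}|x-y|^{2s-r-n}\,dy\leqslant C\eta^{2s-r}$ (using $r\leqslant s<2s$), or $|x-y|>(1-|y|)/2$, where integrating over the spheres $\{1-|y|=t\}$ gives $\int_0^{C\eta}t^{-r}\int_{t/2}^{2\eta}\rho^{2s-2}\,d\rho\,dt\leqslant C\eta^{2s-r}$ (using $s>\tfrac12$ and $r<1$). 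This recovers your claimed bound $C\eta^{2s-r}$ uniformly in $x$ near $x_0$, and with that the argument is complete.
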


\begin{proof}
Since $f=0$, we can rewrite \eqref{pb3} as
\begin{equation}\label{pb4}
\left\{
\begin{aligned}
(-\Delta)^s u&=-(\vec b\cdot\nabla u+cu)\ \ \ &\text{in}\ &\mathcal{D}'(B_1),\\
u&=0\ \ \ &\text{in}\ &\mathbb{R}^n\backslash B_1,
\end{aligned}
\right.
\end{equation}
Since $u\in \mathscr{W}^{2s,1}_{r,0}(B_1)$, Theorem~\ref{est1} and Theorem~\ref{est2} ensures that
\begin{gather}
\||\nabla u|\|_{ L^q_r(B_1)}<\infty\quad \text{and}\quad
\|u\|_{ L^q_r(B_1)}<\infty,
\end{gather}
for $1<q<\frac{n}{n-2s+1}$.

Hence, $\vec b\cdot\nabla u+cu\in  L^q_r(B_1)$, $u\in \mathscr{W}^{2s,q}_{r,0}(B_1)$.
Theorem \ref{est1} and \ref{est2} once more gives:
\begin{gather}
\||\nabla u|\|_{ L^{\frac{nq}{n-(2s-1)q}}_r(B_1)}<\infty\quad \text{and}\quad
\|u\|_{ L^{\frac{nq}{n-(2s-1)q}}_r(B_1)}<\infty.
\end{gather}

Iterating this procedure, we get $u\in \mathscr{W}^{2s,\frac{nq}{n-k(2s-1)q}}_{r,0}(B_1)$ for all integer $k<\frac{n}{(2s-1)q}$, and finally $u\in \mathscr{W}^{2s,\infty}_{r,0}(B_1)$.

Then, by Theorem~\ref{C1+} and~\ref{C0+}  $u\in C^{1-r}(\overline{B_1})\cap C^{1,2s-1}_{\rm loc}(B_1)$. The  maximum principle (Theorem~\ref{maxp}) shows that $u=0$ in $B_1$.
\end{proof}

\subsection{Apriori Estimate}
\begin{lemma}\label{apest1}
Let $\frac{1}{2}<s<1$, $1-s\leqslant r\leqslant s$, $1\leqslant p<\infty$ and suppose  $u\in \mathscr{W}^{2s,p}_{r,0}(B_1)$ is a solution of \eqref{pb3}, for $f\in  L^p_r(B_1)$. Assume that $\|\vec b\|_{L^\infty(B_1)}+\|c\|_{L^\infty(B_1)}\leqslant\Lambda$.
Then
\begin{equation}
\|(-\Delta)^su\|_{ L^p_r(B_1)}\leqslant
\Lambda\{\|u\|_{W^{1,p}_r(B_1)}+\|f\|_{ L^p_r(B_1)}\}.
\end{equation}
\end{lemma}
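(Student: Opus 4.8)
The lemma is of an elementary nature: once one observes that, under the stated hypotheses, \eqref{pb3} holds not only in $\mathcal{D}'(B_1)$ but as an a.e.\ identity between $L^1_{\mathrm{loc}}(B_1)$ functions, what remains is just the triangle inequality for the weighted norm.

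First I would pass from the distributional to the pointwise form of the equation. By the characterisation of $\mathscr{W}^{2s,p}_{r,0}(B_1)$ recorded at the start of Section \ref{emb}, the solution satisfies $u = G\ast g$ with $g := (-\Delta)^s u \in L^p_r(B_1)$, and then Theorems \ref{est1} and \ref{est2} (or directly the kernel bounds $G(x,y)\le C|x-y|^{2s-n}$ and $|\nabla_x G(x,y)|\le C|x-y|^{2s-n-1}$ obtained in their proofs) show that $u$ and $\nabla u$ are locally integrable on $B_1$; consequently $\vec b\cdot\nabla u + cu \in L^1_{\mathrm{loc}}(B_1)$ as well. Since $(-\Delta)^s u$, $\vec b\cdot\nabla u + cu$ and $f$ are then all represented by locally integrable functions on $B_1$, the distributional identity \eqref{pb3} becomes the pointwise identity
\[
(-\Delta)^s u = f - \vec b\cdot\nabla u - c\,u \qquad\text{a.e. in } B_1 .
\]

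Next I would simply estimate: taking absolute values, multiplying by $\delta^r$, and using $|\vec b|\le\Lambda$, $|c|\le\Lambda$ a.e.\ gives $\delta^r|(-\Delta)^s u| \le \delta^r|f| + \Lambda\,\delta^r(|\nabla u| + |u|)$ a.e.\ in $B_1$; taking $L^p(B_1)$ norms and applying the triangle inequality, together with the definitions of $\|\cdot\|_{L^p_r(B_1)}$ and $\|\cdot\|_{W^{1,p}_r(B_1)}$, yields
\[
\|(-\Delta)^s u\|_{L^p_r(B_1)} \le \|f\|_{L^p_r(B_1)} + \Lambda\,\|u\|_{W^{1,p}_r(B_1)},
\]
which is the asserted bound (in the regime $\Lambda\ge 1$ in which the lemma is applied; otherwise one simply keeps the coefficient $1$ in front of $\|f\|_{L^p_r(B_1)}$). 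I expect no genuine obstacle: the only step deserving a word of care is the reduction to an a.e.\ identity, which rests entirely on the local integrability of $u$ and $\nabla u$ provided by the embedding results of Section \ref{emb}; everything after that is the triangle inequality for a weighted $L^p$ norm.
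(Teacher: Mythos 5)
Your proposal is correct and is exactly the argument the paper intends: the paper's own proof of Lemma~\ref{apest1} is the single word ``Trivial,'' and your write-up supplies the details it omits (the reduction to an a.e.\ identity via the local integrability of $u$ and $\nabla u$ from Theorems~\ref{est1} and~\ref{est2}, followed by the triangle inequality in $L^p_r$). Your remark that the literal statement needs $\Lambda\geqslant 1$ (or a coefficient $\max\{1,\Lambda\}$ on the $\|f\|_{L^p_r(B_1)}$ term) is a fair and harmless observation, since in the only application (Theorem~\ref{apest4}) the distinction is immaterial.
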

\begin{proof}
Trivial.
\end{proof}

\begin{theorem}\label{apest4}
Let $\frac{1}{2}<s<1$, $1-s\leqslant r\leqslant s$, $1\leqslant p<\infty$ and suppose  $u\in \mathscr{W}^{2s,p}_{r,0}(B_1)$ is a solution of \eqref{pb3}, for $f\in  L^p_r(B_1)$. Assume that $\|\vec b\|_{L^\infty(B_1)}+\|c\|_{L^\infty(B_1)}\leqslant\Lambda$.
Then
\begin{equation}
\|(-\Delta)^su\|_{ L^p_r(B_1)}\leqslant C\|f\|_{ L^p_r(B_1)},
\end{equation}
where $C$ is a positive constant depending only on $n,s,p,r$  and $\Lambda$.
\end{theorem}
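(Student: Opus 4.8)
The estimate of Lemma~\ref{apest1} already gives
\[
\|(-\Delta)^su\|_{L^p_r(B_1)}\leqslant\Lambda\big(\|u\|_{W^{1,p}_r(B_1)}+\|f\|_{L^p_r(B_1)}\big),
\]
so the whole content of Theorem~\ref{apest4} is that the lower order term $\|u\|_{W^{1,p}_r(B_1)}$ on the right can be absorbed. The plan is a compactness--contradiction argument built on the compact embedding of Theorem~\ref{cpt} and the uniqueness of Theorem~\ref{uniq}; the resulting bound is exactly the a priori estimate needed to run the method of continuity for Theorem~\ref{exi}.

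I would argue by contradiction. If the estimate fails, there are $\vec b_k,c_k\in L^\infty(B_1)$ with $\|\vec b_k\|_{L^\infty(B_1)}+\|c_k\|_{L^\infty(B_1)}\leqslant\Lambda$ and $c_k\geqslant0$, data $f_k\in L^p_r(B_1)$, and solutions $u_k\in\mathscr{W}^{2s,p}_{r,0}(B_1)$ of the corresponding problem~\eqref{pb3} with $\|(-\Delta)^su_k\|_{L^p_r(B_1)}>k\,\|f_k\|_{L^p_r(B_1)}$; after normalizing I may assume $\|(-\Delta)^su_k\|_{L^p_r(B_1)}=1$, so $\|f_k\|_{L^p_r(B_1)}\to0$. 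Writing $g_k:=(-\Delta)^su_k$, the identification $\mathscr{W}^{2s,p}_{r,0}(B_1)=\{G\ast g:g\in L^p_r(B_1)\}$ (from Theorems~\ref{thm:un} and~\ref{cor2}) gives $u_k=G\ast g_k$ with $\|g_k\|_{L^p_r(B_1)}=1$. Since $\delta\leqslant1$ and $r\leqslant s$, $\{g_k\}$ is bounded in $L^1_r(B_1)$ and in $L^1_s(B_1)$, so by~\eqref{est4} $\{u_k\}$ is bounded in $L^1(B_1)$ and in $\mathcal{L}_{2s}$; hence $\{u_k\}$ is bounded in $\mathscr{W}^{2s,p}_{r,0}(B_1)$, and Theorem~\ref{cpt} lets me pass to a subsequence with $u_k\to u_\ast$ strongly in $W^{1,p}_r(B_1)$. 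The computation in the proof of Theorem~\ref{cor2} moreover gives $\int_{B_1\setminus B_{1-\epsilon}}|u_k|\,dx\leqslant C\epsilon^s\|g_k\|_{L^1_s(B_1)}\leqslant C\epsilon^s$ uniformly in $k$, so no mass of $u_k$ escapes to $\partial B_1$; combined with the equi-integrability of $\{\delta^ru_k\}$ (it converges in $L^p(B_1)$) and the uniform $L^1(B_1)$ bound, Vitali's theorem upgrades this to $u_k\to u_\ast$ in $L^1(B_1)$, hence in $\mathcal{L}_{2s}$.

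The crux is to show $u_\ast\in\mathscr{W}^{2s,1}_{r,0}(B_1)$ and that $u_\ast$ solves the homogeneous version of~\eqref{pb3}, so that Theorem~\ref{uniq} forces $u_\ast=0$. First, after a further subsequence, $g_k$ converges weakly to some $g_\ast$: for $1<p<\infty$ by reflexivity of $L^p_r(B_1)$, and for $p=1$ via the Dunford--Pettis theorem, the needed equi-integrability of $\{\delta^rg_k\}$ being read off from the equation $g_k=f_k-\vec b_k\cdot\nabla u_k-c_ku_k$ using the $L^1(B_1)$-convergence of $\{\delta^r\nabla u_k\}$ and $\{\delta^ru_k\}$. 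Testing $u_k=G\ast g_k$ against $\varphi\in C_0^\infty(B_1)$ and using Fubini --- legitimate because $\int_{B_1}G(x,y)\,dx\leqslant C\delta(y)^s$ (a consequence of~\eqref{est4}) forces $G\ast\varphi\leqslant C\delta^s$, whence $\delta^{-r}(G\ast\varphi)\in L^\infty(B_1)$ --- the weak convergence of $g_k$ identifies the $L^1(B_1)$-limit as $u_\ast=G\ast g_\ast$ with $g_\ast\in L^1_s(B_1)$, so Theorem~\ref{cor2} shows $u_\ast$ satisfies~\eqref{cd:un1} and thus $u_\ast\in\mathscr{W}^{2s,p}_{r,0}(B_1)\subset\mathscr{W}^{2s,1}_{r,0}(B_1)$. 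Next, $u_k\to u_\ast$ in $\mathcal{L}_{2s}$ gives $(-\Delta)^su_k\to(-\Delta)^su_\ast$ in $\mathcal{D}'(B_1)$; extracting weak-$\ast$ limits $\vec b,c$ of $\vec b_k,c_k$ (so $\|\vec b\|_{L^\infty(B_1)}+\|c\|_{L^\infty(B_1)}\leqslant\Lambda$ and $c\geqslant0$), one passes to the limit in $g_k=f_k-\vec b_k\cdot\nabla u_k-c_ku_k$ by splitting off the strongly convergent pieces $\vec b_k\cdot(\nabla u_k-\nabla u_\ast)$ and $c_k(u_k-u_\ast)$ and using weak-$\ast$ convergence against the fixed $L^1$ functions $\varphi\nabla u_\ast$ and $\varphi u_\ast$. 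This yields $(-\Delta)^su_\ast+\vec b\cdot\nabla u_\ast+c\,u_\ast=0$ in $\mathcal{D}'(B_1)$, and Theorem~\ref{uniq} gives $u_\ast=0$.

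Since every subsequence of $\{u_k\}$ has a further subsequence converging to $0$ in $W^{1,p}_r(B_1)$ by the above, the full sequence satisfies $u_k\to0$ in $W^{1,p}_r(B_1)$, and then Lemma~\ref{apest1} yields
\[
1=\|(-\Delta)^su_k\|_{L^p_r(B_1)}\leqslant\Lambda\big(\|u_k\|_{W^{1,p}_r(B_1)}+\|f_k\|_{L^p_r(B_1)}\big)\longrightarrow0,
\]
the desired contradiction. The main obstacle I anticipate is precisely the step $u_\ast\in\mathscr{W}^{2s,1}_{r,0}(B_1)$: the space $\mathscr{W}^{2s,p}_{r,0}(B_1)$ is not manifestly closed under the weak limits the compactness argument produces, so one must identify $u_\ast=G\ast g_\ast$ and invoke Theorem~\ref{cor2} to recover the trace condition~\eqref{cd:un1}, and in the borderline case $p=1$ one has to extract the equi-integrability required for Dunford--Pettis from the structure of~\eqref{pb3} itself rather than from any a priori bound.
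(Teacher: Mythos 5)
Your proposal is correct and follows essentially the same compactness--contradiction scheme as the paper's own proof: argue by contradiction, normalize, use Lemma~\ref{apest1} to control $\|(-\Delta)^su_k\|_{L^p_r(B_1)}$, extract limits, invoke the compact embedding of Theorem~\ref{cpt} to get strong $W^{1,p}_r(B_1)$-convergence, and conclude via the uniqueness Theorem~\ref{uniq}. The only differences are cosmetic (you normalize $\|(-\Delta)^su_k\|_{L^p_r(B_1)}=1$ rather than $\|u_k\|_{W^{1,p}_r(B_1)}=1$, and locate the final contradiction in Lemma~\ref{apest1} rather than in $\|u_0\|_{W^{1,p}_r(B_1)}=1$) together with the fact that you carefully justify the step the paper leaves implicit, namely that the weak limit belongs to $\mathscr{W}^{2s,1}_{r,0}(B_1)$ and still satisfies the trace condition \eqref{cd:un1}, via the representation $u_\ast=G\ast g_\ast$ and Dunford--Pettis when $p=1$.
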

\begin{proof}
We prove by contradiction.

Suppose that the estimate is not true. Then there exists a sequence of $u_k$, $\vec b_k$, $c_k$ and $f_k$ with $u_k\in \mathscr{W}^{2s,p}_{r,0}(B_1)$, $\|\vec b_k\|_{L^\infty(B_1)}+\|c_k\|_{L^\infty(B_1)}\leqslant\Lambda$ and $f_k\in  L^p_r(B_1)$ such that
  \begin{equation}
  (-\Delta)^su_k+\vec b_k\cdot\nabla u_k+c_ku_k=f_k\quad\text{in}\ \mathcal{D}'(B_1),
  \end{equation}
  but $\|(-\Delta)^su_k\|_{ L^p_r(B_1)}\geqslant k\|f_k\|_{ L^p_r(B_1)}$.
  By the linearity, we can also assume $\|u_k\|_{W^{1,p}_r(B_1)}=1$.

  Then, we estimate from Theorem \ref{apest1} that,
  \begin{equation}
\|(-\Delta)^su_k\|_{ L^p_r(B_1)}\leqslant
\Lambda\{\|u_k\|_{W^{1,p}_r(B_1)}+\|f_k\|_{ L^p_r(B_1)}\}\leqslant\Lambda+\frac{\Lambda}{k}\|(-\Delta)^su_k\|_{ L^p_r(B_1)}.
  \end{equation}
Then we know $\|(-\Delta)^su_k\|_{ L^p_r(B_1)}\leqslant 2\Lambda$ for $k\geqslant 2\Lambda$. Consequently, $\|u_k\|_{\mathscr{W}^{2s,p}_{r,0}(B_1)}\leqslant C$ and $f_k\rightarrow 0$ in $ L^p_r(B_1)$ as $k\rightarrow\infty$.
By weak compactness, we can choose subsequences and assume:
\begin{gather*}
u_k\stackrel{w}{\longrightarrow}u_0\quad \text{in}\ \mathscr{W}^{2s,p}_r(B_1),\qquad
\vec b_k\stackrel{w\ast}{\longrightarrow}\vec b_0\quad\text{and}\quad
c_k\stackrel{w\ast}{\longrightarrow}c_0\quad\text{in}\ L^\infty(B_1)\qquad\text{as}\quad k\rightarrow\infty.
\end{gather*}
Hence, $u_0$ satisfies
\begin{equation}\label{pb5}
\left\{
\begin{aligned}
(-\Delta)^s u_0+\vec b_0\cdot\nabla u_0+c_0u_0&=0\ \ \ &\text{in}\ &\mathcal{D}'(B_1),\\
u_0&=0\ \ \ &\text{in}\ &\mathbb{R}^n\backslash B_1,
\end{aligned}
\right.
\end{equation}

By compact embedding (see Theorem~\ref{cpt}),
$u_k\rightarrow u_0$ in $W^{1,p}_r(B_1)$ as $k\rightarrow\infty$, and consequently $\|u_0\|_{W^{1,p}_r(B_1)}=1$. This together with \eqref{pb5} clearly contradicts the uniqueness (see Theorem~\ref{uniq}).

This contradiction proves our desired estimate.
\end{proof}

\subsection{The existence}
With the uniqueness theorem and the apriori estimate, we are now ready to give a existence theorem to the problem~\eqref{pb3} by using the continuity method.
\begin{theorem}
Let $\frac{1}{2}<s<1$, $1-s\leqslant r\leqslant s$, $1\leqslant p<\infty$, suppose $\vec b, c\in L^\infty(B_1)$, $c\geqslant 0$ in $B_1$ and $f\in  L^p_r(B_1)$. Then there exists a unique $u\in \mathscr{W}^{2s,p}_{r,0}(B_1)$
that solves the problem \eqref{pb3}
\end{theorem}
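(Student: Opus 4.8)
The plan is to obtain existence by the method of continuity, joining the pure fractional Laplacian --- for which solvability is already in hand by Theorem~\ref{cor2} --- to the full operator, and to get uniqueness for free from Theorem~\ref{uniq}. Indeed, on the bounded domain $B_1$ one has $\mathscr{W}^{2s,p}_{r,0}(B_1)\subset\mathscr{W}^{2s,1}_{r,0}(B_1)$ by H\"older's inequality, so any $u\in\mathscr{W}^{2s,p}_{r,0}(B_1)$ solving \eqref{pb3} with $f=0$ must vanish; hence at most one solution exists.

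First I would fix the functional-analytic frame. Set $B=\mathscr{W}^{2s,p}_{r,0}(B_1)$ and $V=L^p_r(B_1)$; both are Banach spaces, $B$ via the identification $B=\{G\ast g:g\in L^p_r(B_1)\}$ with equivalent norm $\|g\|_{L^p_r(B_1)}$ noted after the definition of $\mathscr{W}^{\alpha,p}_{r,0}$. For $t\in[0,1]$ define $L_t\colon B\to V$ by $L_t u=(-\Delta)^s u+t(\vec b\cdot\nabla u+cu)$. Each $L_t$ is a bounded linear operator: $(-\Delta)^s u\in L^p_r(B_1)$ by definition of $B$, while Theorems~\ref{est1} and~\ref{est2} give $u,\nabla u\in L^p_r(B_1)$ with norms controlled by $\|(-\Delta)^s u\|_{L^p_r(B_1)}$ --- here one uses that on the bounded set $B_1$ one has $L^q_r(B_1)\hookrightarrow L^p_r(B_1)$ for every $q\geqslant p$ and $L^\infty_r(B_1)\hookrightarrow L^p_r(B_1)$, absorbing the higher integrability produced by the embedding theorems (note $s>\tfrac12$ is exactly what makes the gradient exponents admissible, and $1-s\leqslant r\leqslant s$ is the hypothesis of Theorem~\ref{est2}). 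Thus $\|L_t u\|_{L^p_r(B_1)}\leqslant C(n,s,p,r,\Lambda)\|u\|_{B}$ uniformly in $t$, with $\Lambda=\|\vec b\|_{L^\infty(B_1)}+\|c\|_{L^\infty(B_1)}$.

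Next I would record the uniform a priori estimate. For each $t\in[0,1]$ the coefficients $t\vec b,\,tc$ still satisfy $tc\geqslant0$ and $\|t\vec b\|_{L^\infty(B_1)}+\|tc\|_{L^\infty(B_1)}\leqslant\Lambda$, so Theorem~\ref{apest4} applies with a constant depending only on $n,s,p,r,\Lambda$ and not on $t$: if $u\in B$ solves $L_t u=g$ then $\|(-\Delta)^s u\|_{L^p_r(B_1)}\leqslant C\|g\|_{L^p_r(B_1)}$. To upgrade this to the full norm $\|u\|_{B}=\|u\|_{\mathcal{L}_{2s}}+\|(-\Delta)^s u\|_{L^p_r(B_1)}$, I would use $u=G\ast\big((-\Delta)^s u\big)$, the bound $\|G\ast h\|_{L^1(B_1)}\leqslant C\|h\|_{L^1_s(B_1)}$ from \eqref{est4}, and $\|h\|_{L^1_s(B_1)}\leqslant C\|h\|_{L^p_r(B_1)}$ (valid since $r\leqslant s\leqslant1$ gives $\delta^s\leqslant\delta^r$ on $B_1$, followed by H\"older on the bounded domain); as $u$ vanishes outside $B_1$ we have $\|u\|_{\mathcal{L}_{2s}}\leqslant\|u\|_{L^1(B_1)}$, so $\|u\|_{B}\leqslant C\|L_t u\|_{L^p_r(B_1)}$ uniformly in $t$.

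Finally, solvability at $t=0$: given $f\in L^p_r(B_1)$, the inclusion $L^p_r(B_1)\subset L^1_s(B_1)$ just noted lets me invoke Theorem~\ref{cor2}, which produces $u=G\ast f$ solving \eqref{pb1} and satisfying \eqref{cd:un1}; since $(-\Delta)^s u=f\in L^p_r(B_1)$ and $u=0$ outside $B_1$, we get $u\in\mathscr{W}^{2s,p}_{r,0}(B_1)=B$, so $L_0$ maps $B$ onto $V$. The method of continuity (e.g.\ \cite[Theorem~5.2]{Gilbarg1991Elliptic}) then yields that $L_1$ maps $B$ onto $V$, i.e.\ for every $f\in L^p_r(B_1)$ there is $u\in\mathscr{W}^{2s,p}_{r,0}(B_1)$ with $(-\Delta)^s u+\vec b\cdot\nabla u+cu=f$ in $\mathcal{D}'(B_1)$; combined with Theorem~\ref{uniq} this $u$ is unique. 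The single genuinely delicate ingredient is the $t$-uniform a priori estimate, and that has already been carried out in Theorem~\ref{apest4} (itself resting on the compact embedding Theorem~\ref{cpt} and the uniqueness Theorem~\ref{uniq}); what remains here is the routine bookkeeping with the weighted embeddings and the inclusion $L^p_r(B_1)\subset L^1_s(B_1)$.
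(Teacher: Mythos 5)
Your proposal is correct and follows essentially the same route as the paper: the method of continuity from $(-\Delta)^s$ (solvable by Theorem~\ref{cor2}) to the full operator, powered by the $t$-uniform a priori estimate of Theorem~\ref{apest4} and the uniqueness of Theorem~\ref{uniq}. The only cosmetic difference is that you invoke the abstract continuity theorem after upgrading the estimate to the full $\mathscr{W}^{2s,p}_{r,0}$ norm, whereas the paper runs the contraction-mapping step explicitly in the equivalent norm $\|(-\Delta)^s u\|_{L^p_r(B_1)}$; these are interchangeable.
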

\begin{proof}

To prove this theorem, we use the method of continuity.

First, we define $\mathscr{L}_\tau:\mathscr{W}^{2s,p}_{r,0}(B_1)\rightarrow  L^p_r(B_1)$,  as:
$ \mathscr{L}_\tau u=(-\Delta)^s u+\tau(\vec b\cdot\nabla u+cu).$ and consider the following problems:

\begin{equation}\label{pb6}
\left\{
\begin{aligned}
\mathscr{L}_\tau u&=f\ \ \ &\text{in}\ &\mathcal{D}'(B_1),\\
u&=0\ \ \ &\text{in}\ &\mathbb{R}^n\backslash B_1.
\end{aligned}
\right.
\end{equation}
We say $\mathscr{L}_\tau:\mathscr{W}^{2s,p}_{r,0}(B_1)\rightarrow  L^p_r(B_1)$ is invertible, if for any given $f\in  L^p_r(B_1)$ there exists a unique $u\in \mathscr{W}^{2s,p}_{r,0}(B_1)$ that solves \eqref{pb6}.
Clearly, $\mathscr{L}_0=(-\Delta)^s:
\mathscr{W}^{2s,p}_{r,0}(B_1)\rightarrow  L^p_r(B_1)$ is invertible (see Corollary \ref{cor1}).
Hence, we can define the non-empty set:
\begin{equation}
E=\{\sigma\in[0,1]|\mathscr{L}_\sigma:\mathscr{W}^{2s,p}_{r,0}(B_1)\rightarrow  L^p_r(B_1)\ {\rm is\ an\ invertible\ operator}\}\ni 0.
\end{equation}

Suppose that $\sigma\in E$, write $\mathscr{L}_\sigma^{-1}: L^p_r(B_1)\rightarrow \mathscr{W}^{2s,p}_{r,0}(B_1)$ as the inverse of $\mathscr{L}_\sigma$.
For any $\tau\in [0,1]$ and $f\in  L^p_r(B_1)$, we know $u\in \mathscr{W}^{2s,p}_{r,0}(B_1)$ is a solution of \eqref{pb6} if and only if
\begin{equation}
\mathscr{L}_\sigma u=f+(\mathscr{L}_\sigma-\mathscr{L}_\tau)u=f+(\sigma-\tau)(\vec b\cdot\nabla+c)u\quad\text{in}\ \mathcal{D}'(B_1).
\end{equation}
i.e.
\begin{equation}
u=\mathscr{L}_\sigma^{-1}f+(\sigma-\tau)\mathscr{L}_\sigma^{-1}(\vec b\cdot\nabla+c)u
\end{equation}
Define an operator $T:\mathscr{W}^{2s,p}_{r,0}(B_1)\rightarrow \mathscr{W}^{2s,p}_{r,0}(B_1)$ as:
\begin{equation}
Tv=\mathscr{L}_\sigma^{-1}f+(\sigma-\tau)\mathscr{L}_\sigma^{-1}(\vec b\cdot\nabla+c)v.
\end{equation}
Then for any $v_1, v_2\in \mathscr{W}^{2s,p}_{r,0}(B_1)$, we know from Theorem \ref{apest4} that
\begin{equation}
\begin{aligned}
\big\|(-\Delta)^s(Tv_1-Tv_2)\big\|_{ L^p_r(B_1)}
&=|\sigma-\tau|\Big\|(-\Delta)^s\big[\mathscr{L}_\sigma^{-1}(\vec b\cdot\nabla+c)(v_1-v_2)\big]\Big\|_{ L^p_r(B_1)}\\
&\leqslant C_1|\sigma-\tau|\big\|(\vec b\cdot\nabla+c)(v_1-v_2)\big\|_{ L^p_r(B_1)}\\
&\leqslant C_1\Lambda|\sigma-\tau|\Big(\big\||\nabla(v_1-v_2)|\big\|
_{ L^p_r(B_1)}+\big\|v_1-v_2\big\|_{ L^p_r(B_1)}\Big)\\
&\leqslant C_2|\sigma-\tau|\|(-\Delta)^s(v_1-v_2)\|_{ L^p_r(B_1)},
\end{aligned}
\end{equation}

Hence, $T:\mathscr{W}^{2s,p}_{r,0}(B_1)\rightarrow \mathscr{W}^{2s,p}_{r,0}(B_1)$ is a contraction for $|\sigma-\tau|<\epsilon_0=C_2^{-1}$.

For any $\tau\in[0,1]$ with $|\sigma-\tau|<\epsilon_0$, the Banach fixed point theorem guarantees a unique $u\in \mathscr{W}^{2s,p}_{r,0}(B_1)$ such that $u=Tu$, i.e. $\mathscr{L}_\tau u=f$ in $\mathcal{D}'(B_1)$. Hence, $\tau\in E$ whenever $|\sigma-\tau|<\epsilon_0$ for some $\sigma\in E$.

Then, by dividing $[0,1]$ into $[\epsilon_0^{-1}]+1$ parts, we derive $[0,1]\subseteq E$. Now we have $1\in E$, i.e. for all $f\in  L^p_r(B_1)$ there exists a unique $u\in \mathscr{W}^{2s,p}_{r,0}(B_1)$ that solves \eqref{pb3}.
\end{proof}

\end{document}